\newtheorem{teo}{Theorem}[section]
\newtheorem{lema}{Lemma}[section]
\newtheorem{pro}{Proposition}[section]
\newtheorem{defi}{Definition}[section]
\newtheorem{rema}{Remark}[section]
\newtheorem{coro}{Corollary}[section]
\numberwithin{equation}{section}
\def\d {\mathrm{d}}
\def \x {\times}
\def \T{\rm{T}}
\def \del{\partial}
\def \z{\zeta}
\def \End {\mathrm{End}}
\def\f{\varphi}
\def\dr{\partial_r}
\def\i{\lrcorner}
\def \RM{\mathbb{R}}
\def \TM{\mathbb{T}}
\def\r{\end{proof}}
\def\SU{\mathrm{SU}}
\def\Sp{\mathrm{Sp}}
\def\vol{\mathrm{vol}}
\def\Id{\mathrm{Id}}
\def\beq{\begin{equation}}
\def\eeq{\end{equation}}
\def\bea{\begin{eqnarray*}}
\def\eea{\end{eqnarray*}}
\def\n{\nabla}
\def\psp{\psi ^+}
\def\psm{\psi ^-}
\def\tr{\mathrm{tr}}
\def\pp{\mathfrak{p}}
\def\kk{\mathfrak{k}}
\def\SU{\mathrm{SU}}
\def\su{\mathfrak{su}}
\def\gg{\mathfrak{g}}
\def\Hess{{\rm Hess}}
\def\sideremark#1{\ifvmode\leavevmode\fi\vadjust{\vbox to0pt{\vss% the remark
 \hbox to 0pt{\hskip\hsize\hskip1em%                          will appear only
 \vbox{\hsize2.5cm\tiny\raggedright\pretolerance10000%          on the side
 \noindent #1\hfill}\hss}\vbox to8pt{\vfil}\vss}}}%
\begin{document}
\title{Toric Nearly K\"ahler manifolds}
\subjclass[2000]{53C12, 53C24, 53C55}
\keywords{Killing vector field, nearly K\"ahler manifold, toric structure}
\author[A. Moroianu]{ Andrei Moroianu}
\address{Andrei Moroianu, Laboratoire de Math\'ematiques d'Orsay, Univ.\ Paris-Sud, CNRS,
Universit\'e Paris-Saclay, 91405 Orsay, France}
\email{andrei.moroianu@math.cnrs.fr}
\author[P.-A.Nagy]{Paul-Andi Nagy}
\address{Paul-Andi Nagy, Department of Mathematics, The University of Murcia, Campus de Espinardo, E-30100 Espinardo, Murcia, Spain}
\email{npaulandi@gmail.com}

%\date{\today}
\begin{abstract} 
We show that 6-dimensional strict nearly K\"ahler manifolds admitting
effective $\TM^3$ actions by automorphisms are completely characterized
in the neigbourhood of each point by a function on $\RM^3$ satisfying
a certain Monge--Amp\`ere-type equation.
\end{abstract}
\maketitle
%\tableofcontents

\section{Introduction}

Nearly Kähler manifolds were originally introduced as the class $\mathcal{W}_1$ in the Gray-Hervella classification of almost Hermitian manifolds \cite{gh}. More precisely, an almost Hermitian manifold $(M,g,J)$ is called nearly Kähler (NK in short) if $(\nabla_XJ)(X)=0$ for every vector field $X$ on $M$, where $\nabla$ denotes the Levi-Civita covariant derivative of $g$. A NK manifold is called {\em strict} if $\nabla J\ne 0$.

In \cite{n} it was shown that every NK manifold is locally a product of one of the following types of factors:
\begin{itemize}
\item Kähler manifolds;
\item 3-symmetric spaces;
\item twistor spaces of positive quaternion-Kähler manifolds;
\item 6-dimensional strict NK manifolds.
\end{itemize}

It is thus crucial to understand the 6-dimensional case, to which we will restrict in the sequel. In dimension 6, strict NK are important for several further reasons: they admit real Killing spinors \cite{fg}, in particular they are Einstein with positive scalar curvature, and they can be characterized in terms of exterior differential systems as manifolds with special generic 3-forms in the sense of Hitchin \cite{hit}.

Until 2015, the only known examples of compact 6-dimensional strict NK manifolds were the 3-symmetric spaces $S^6=\mathrm{G}_2/\SU(3)$, $F(1,2)=\SU_3/S^1\times S^1$,
$CP^3=\Sp_2/S^1\x \Sp_1$ and $S^3\x S^3=\Sp_1\x\Sp_1\x\Sp_1/\Sp_1$. Moreover, J.-B. Butruille has shown in \cite{jbb} that these are the only homogeneous examples.

A breakthrough was achieved very recently by L. Foscolo and M. Haskins, who studied cohomogeneity one NK metrics and obtained the first examples of non-homogeneous NK structures on $S^6$ and $S^3\x S^3$, cf. \cite{fh}, \cite{f}. The corresponding metrics are shown to exist but cannot be constructed explicitly. However, their isometry group is known, and is equal to $\SU(2)\times\SU(2)$ in both cases.

 It is easy to show that a torus acting by automorphisms of a NK structure $(M^6,g,J)$ has dimension at most 3 (Corollary \ref{c35}), and if equality holds, then the corresponding commuting vector fields span a totally real distribution on a dense open set of $M$ (cf. Lemma \ref{2.3}). In the present paper we study 6-dimensional nearly Kähler manifolds whose automorphism group has maximal possible rank. We call them {\em toric NK structures} by analogy with the Kähler case.

Our main result is to give a local characterization of toric NK structures in terms of a single function of 3 real variables satisfying to a certain Monge--Amp\`ere-type equation. We conjecture that the only compact toric NK manifold is $S^3\x S^3$ with its 3-symmetric NK structure.

\section{Structure equations}
Let $M^6$ be an oriented manifold. An $\SU(3)$-structure on $M$ is a
triple $(g,J,\psi)$, where $g$ is a Riemannian metric, $J$ is a
compatible almost complex structure (i.e. $\omega:=g(J\cdot,\cdot)$ is
a 2-form), and $\psi=\psi ^++i\psi ^-$ is a
$(3,0)$ complex volume form satisfying 
\beq\label{vo}\psi\wedge\bar\psi={-8i}\vol_g.
\eeq

Following Hitchin \cite{hit}, it is possible to characterize
$\SU(3)$-structures in terms of exterior forms only. 
If $\psi ^+$ is a three-form on $M$,
one can define $K\in \End (\T M)\otimes \Lambda ^6M$ by
$$X\mapsto K(X):=(X\i\psi ^+)\wedge\psi ^+\in \Lambda ^5M\simeq
\T M\otimes \Lambda ^6M.$$

\begin{lema}\label{1.1} {\rm (\cite{hit})} A non-degenerate $2$-form $\omega$ on $M$, and a $3$-form $\psi
^+\in \Lambda ^3M$ satisfying 
\begin{enumerate}
\item[(i)] $\omega\wedge\psi ^+=0$.
\item[(ii)] $\tr K^2=-\frac16 (\omega ^3)^2\in (\Lambda ^6M)^{\otimes 2}$.
\item[(iii)] $\omega(X,K(X))/\omega^3>0$ for every $X\ne 0$.
\end{enumerate}
define an $\SU(3)$-structure on $M$.
\end{lema}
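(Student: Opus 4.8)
The plan is to reduce everything to pointwise linear algebra on $V:=T_pM\cong\RM^6$, since all three conditions are purely algebraic. The heart of the matter is Hitchin's observation that the map $K$ associated to a $3$-form always squares to a multiple of the identity: composing the endomorphism parts and tensoring the $\Lambda^6$-factors, one has $K^2=\lambda(\psp)\,\Id_V$ for a scalar $\lambda(\psp)\in(\Lambda^6V^*)^{\otimes2}$, and taking traces gives $\lambda(\psp)=\tfrac16\tr K^2$. I would establish this identity by the stability/open-orbit argument: when $\lambda(\psp)\neq0$ the form $\psp$ lies in an open $GL(V)$-orbit, so it suffices to verify $K^2=\lambda\,\Id$ on a single normal form and then invoke the $GL(V)$-equivariance of $K$. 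This is the step I expect to be the main obstacle, as it is the only genuinely nontrivial algebraic input.

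Granting this, condition (ii) reads $\tr K^2=-\tfrac16(\omega^3)^2$, hence $\lambda(\psp)=-\tfrac1{36}(\omega^3)^2<0$, the negativity being automatic since $\omega$ is non-degenerate and $(\omega^3)^2>0$ as a square in $(\Lambda^6V^*)^{\otimes2}$. I would then set
\beq
J:=\frac{6}{\omega^3}\,K\in\End(V),
\eeq
using $\omega^3$ to trivialise the $\Lambda^6$-factor; the computation $J^2=\tfrac{36}{(\omega^3)^2}K^2=-\Id$ shows that $J$ is an almost complex structure. The sign in (ii) is precisely what forces $J^2=-\Id$ rather than $J^2=+\Id$, the latter corresponding to the split real orbit.

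Next I would show that condition (i) encodes the compatibility of $\omega$ with $J$. In the model $\SU(3)$-picture $\psp$ is of type $(3,0)+(0,3)$ and $\omega$ of type $(1,1)$, so $\omega\wedge\psp$ vanishes for type reasons; conversely I would check, again on the normal form, that $\omega\wedge\psp=0$ is equivalent to the $J$-invariance $\omega(J\cdot,J\cdot)=\omega$. Defining the bilinear form $g(X,Y):=\omega(X,JY)$, the $J$-invariance of $\omega$ makes $g$ symmetric, while condition (iii), which after rescaling reads $\omega(X,JX)>0$ for $X\neq0$, makes $g$ positive-definite; one then has $\omega=g(J\cdot,\cdot)$ and $g(J\cdot,J\cdot)=g$, so that $(g,J,\omega)$ is an almost Hermitian structure.

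Finally I would produce the complex volume form. Setting $\psm:=-\psp(J\cdot,\cdot,\cdot)$ and $\psi:=\psp+i\psm$, the type decomposition shows that $\psi$ is a nowhere-vanishing $(3,0)$-form, so its stabiliser inside $\mathrm{U}(3)$ is $\SU(3)$. It remains to match the normalisation: a direct evaluation on a unitary basis, where the numerical constants $-\tfrac16$ in (ii) and $-8i$ in \eqref{vo} are calibrated to agree, yields $\psi\wedge\bar\psi=-8i\,\vol_g$. This pins down the $\SU(3)$-structure $(g,J,\psi)$ and completes the argument.
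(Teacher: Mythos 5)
Your proposal is correct and follows essentially the same route as the paper's own proof: establish the pointwise identity $K^2=\frac16\,\Id\otimes\tr(K^2)$, define $J:=6K/\omega^3$ (an almost complex structure by (ii)), obtain the metric $g:=\omega(\cdot,J\cdot)$ whose symmetry comes from (i) and positivity from (iii), and set $\psm:=-\psp(J\cdot,\cdot,\cdot)$ so that $\psp+i\psm$ is a normalized $(3,0)$-volume form. The only difference is one of detail: the paper labels the key algebraic facts ``easy'' or ``straightforward to check,'' whereas you spell out how to verify them via Hitchin's stability/open-orbit argument and evaluation on a normal form, which is indeed the standard way to carry out those checks.
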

\begin{proof}
It is easy to check that 
\beq\label{trk}K^2=\frac16 \Id\otimes\tr(K^2)\in \End (\T M)\otimes (\Lambda ^6M)^{\otimes 2}.
\eeq
From (ii) we see that $J:=6K/\omega ^3$ is an almost complex structure
on $M$. The tensor $g$ defined by $g(\cdot,\cdot):=\omega(\cdot,J\cdot)$
is symmetric by (i) and positive definite by (iii). Finally, it is straightforward to
check that $\psi ^++i\psi ^-$ is a $(3,0)$ complex volume form
satisfying \eqref{vo}, where $\psi^{-}:=-\psi^{+}(J\cdot,
\cdot, \cdot)$. 
\r

Since $\vol_g=\frac16\omega ^3$, \eqref{vo} is equivalent to 
\begin{equation} \label{comp}
\psi^{+} \wedge \psi^{-}=\frac{2}{3} \omega^3.
\end{equation}

\begin{defi} A strict NK structure on $M^6$ is
an $\SU(3)$-structure $(\psi^{\pm}, \omega)$ satisfying  
\begin{equation} \label{nk1}
\d \omega=3 \psi^{+}
\end{equation}
and
\begin{equation} \label{nk2}
\d \psi^{-}=-2 \omega \wedge \omega.
\end{equation}
\end{defi}

For an alternative definition and more details on NK manifolds we refer
to \cite{gray} or \cite{mns1}.

Let $g$ denote the Riemannian metric induced by $(\psi^{\pm},
\omega)$, with Levi-Civita covariant derivative $\n$, and let $J$ denote
the induced almost complex structure. From now on we identify vectors
and 1-forms, as well as skew-symmetric endomorphisms and 2-forms using
$g$.

We then have the relations (cf. \cite{mns1}):
\beq JX\i\psp=(X\i\psp)\circ J=- J\circ (X\i\psp),\qquad\forall X\in \T M,
\eeq
\beq\label{np}\n_XJ=X\i \psp,\qquad\forall X\in \T M.
\eeq

\bigskip

\section{Torus actions by automorphisms}
\medskip

Suppose that $(M^6,\psi^{\pm}, \omega,g,J)$ is a strict NK structure
carrying a toric action by automorphisms. More precisely, we assume that there exists some positive integer $d\ge1$ and 
$k$ linearly independent Killing vector fields $\z_i,\ 1 \le i \le d$ such that
$[\z_i,\z_j]=0$ for $1\le i,j\le d$, which are pseudo-holomorphic in the sense  
that $L_{\z_i}J=0$ for $1\le i\le d$. This last condition is equivalent with the requirement that  
\begin{equation}\label{kh}
L_{\z_i} \psi^{\pm}=0,\ L_{\z_i} \omega=0,\qquad 1 \le i \le d.
\end{equation}
Notice that if $M$ is compact and not isometric with the
standard sphere, \eqref{kh} follow directly from the Killing condition
(cf. \cite{mns1}, Proposition 3.1). 

We define the smooth functions  
$\mu_{ij}$ on $M$ by setting $\mu_{ij}:=\omega(\z_i, \z_j)$. 
\begin{lema} \label{start}
The following relations hold for every $i,j,k\in\{1,\ldots,d\}$:
\begin{itemize}
\item[(i)] $\d \mu_{ij}=-3\z_i \lrcorner \z_j \lrcorner \psi^{+}$.
\item[(ii)] $\psi^{+}(\z_i, \z_j, \z_k)=0$.
\item[(iii)] $[\z_i, J\z_j]=0$.
\item[(iv)] $[J\z_i, J\z_j]=4(J\z_j\i\z_i\i\psp)^\sharp$.
\end{itemize}
\end{lema}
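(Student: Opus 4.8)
The plan is to obtain (i)--(iii) directly from Cartan's formula together with the invariance conditions \eqref{kh}, and to treat (iv), which is the only part using the covariant derivative, as the main computation. For (i), since $L_{\z_i}\omega=0$, Cartan's formula applied to $\omega$ together with \eqref{nk1} gives $\d(\z_i\i\omega)=-\z_i\i\d\omega=-3\,\z_i\i\psp$. Applying Cartan's formula once more, now to the $1$-form $\z_i\i\omega$ along $\z_j$, and using $L_{\z_j}(\z_i\i\omega)=0$ (which follows from $[\z_i,\z_j]=0$ and $L_{\z_j}\omega=0$), I get $\d(\z_j\i\z_i\i\omega)=-\z_j\i\d(\z_i\i\omega)$. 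As $\z_j\i\z_i\i\omega=\mu_{ij}$, substituting the expression above yields $\d\mu_{ij}=3\,\z_j\i\z_i\i\psp=-3\,\z_i\i\z_j\i\psp$, which is (i).

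For (ii), each function $\mu_{ij}$ is invariant under the flow of every $\z_k$, i.e. $\z_k(\mu_{ij})=0$, because $\omega$ is preserved and the fields commute; evaluating (i) on $\z_k$ then gives $0=\d\mu_{ij}(\z_k)=-3\,\psp(\z_j,\z_i,\z_k)$, hence (ii). For (iii), the pseudo-holomorphicity $L_{\z_i}J=0$ is exactly the statement $[\z_i,JY]=J[\z_i,Y]$ for all $Y$; taking $Y=\z_j$ and using $[\z_i,\z_j]=0$ gives $[\z_i,J\z_j]=J[\z_i,\z_j]=0$.

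The substantial part is (iv). Since $\n$ is torsion-free, $[J\z_i,J\z_j]=\n_{J\z_i}(J\z_j)-\n_{J\z_j}(J\z_i)$, and I would expand each term by the Leibniz rule and \eqref{np}, writing $\n_X(J\z)=(X\i\psp)\z+J\n_X\z$. This splits the bracket into an algebraic part $(J\z_i\i\psp)\z_j-(J\z_j\i\psp)\z_i$ and a derivative part $J\big(\n_{J\z_i}\z_j-\n_{J\z_j}\z_i\big)$. The derivative part is reduced using (iii): the relations $[\z_j,J\z_i]=0$ and $[\z_i,J\z_j]=0$ permit the replacements $\n_{J\z_i}\z_j=\n_{\z_j}(J\z_i)$ and $\n_{J\z_j}\z_i=\n_{\z_i}(J\z_j)$, and expanding these once more with \eqref{np} while invoking $[\z_i,\z_j]=0$ makes the remaining $\n\z$ terms cancel, leaving a second purely algebraic expression.

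It then remains to show that the two algebraic contributions coincide and to rewrite their sum in the stated form. For this I would use the pointwise relations recorded just before \eqref{np}, namely $JX\i\psp=(X\i\psp)\circ J=-J\circ(X\i\psp)$, which are equivalent to $\psp(JX,Y,Z)=\psp(X,JY,Z)=\psp(X,Y,JZ)$ and thus let one move $J$ freely across the arguments of $\psp$ and in and out of the musical isomorphism $(\cdot)^\sharp$. Combined with the skew-symmetry of $\psp$ and the anticommutativity of interior products, these show that both contributions equal $2\,(J\z_j\i\z_i\i\psp)^\sharp$, so that adding them produces the factor $4$ and yields (iv). The main obstacle is precisely this final bookkeeping: one must correctly track the signs arising from the anticommuting interior products, from the skew-adjointness of $J$ hidden in $(\cdot)^\sharp$, and from the three equivalent ways of transferring $J$ through $\psp$, so as to ensure that the two algebraic contributions add rather than cancel and give exactly the coefficient $4$ with the stated placement of $J$.
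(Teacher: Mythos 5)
Your parts (i)--(iii) are correct and essentially identical to the paper's own argument: (i) is the same double application of Cartan's formula using the invariance of $\omega$ under the commuting fields, (ii) follows by evaluating (i) against $\z_k$, and (iii) is just $L_{\z_i}J=0$ applied to $\z_j$.

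For (iv) you take a genuinely different route. The paper never expands the bracket directly: it combines the general almost-Hermitian identity $N(X,Y)=J(L_XJ)Y-(L_{JX}J)Y$ with the nearly K\"ahler formula $N(X,Y)=-4\,Y\i JX\i\psp$ of \eqref{l2}, specializes to $X=\z_i$ (where $L_{\z_i}J=0$) to obtain the endomorphism identity $L_{J\z_i}J=4\,J\z_i\i\psp$ of \eqref{lz}, and only then evaluates on $\z_j$ using (iii). You instead expand $[J\z_i,J\z_j]=\n_{J\z_i}(J\z_j)-\n_{J\z_j}(J\z_i)$ via torsion-freeness, the Leibniz rule and \eqref{np}, use (iii) to trade $\n_{J\z_i}\z_j$ for $\n_{\z_j}(J\z_i)$ (and symmetrically), and let $[\z_i,\z_j]=0$ kill the leftover connection terms. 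Your final claim, which you only sketch, is indeed correct, and the bookkeeping closes as you assert: setting $\alpha:=\psp(J\z_i,\z_j,\cdot)$, the derivative part equals $J\big((\z_j\i\psp)\z_i-(\z_i\i\psp)\z_j\big)=2J\big(\psp(\z_j,\z_i,\cdot)\big)^\sharp=-2\big(\psp(\z_j,J\z_i,\cdot)\big)^\sharp=2\alpha^\sharp$, using $J\circ(X\i\psp)=-(X\i\psp)\circ J$ and skew-symmetry, while the algebraic part equals $\big(\psp(J\z_i,\z_j,\cdot)\big)^\sharp-\big(\psp(J\z_j,\z_i,\cdot)\big)^\sharp=2\alpha^\sharp$, since $\psp(J\z_j,\z_i,\cdot)=\psp(\z_j,J\z_i,\cdot)=-\alpha$; their sum is $4\alpha^\sharp=4(J\z_j\i\z_i\i\psp)^\sharp$ because $\alpha=\psp(\z_i,J\z_j,\cdot)$. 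As for what each approach buys: the paper's packaging through the Nijenhuis tensor does the sign-tracking once and for all and yields the stronger identity \eqref{lz}, valid against arbitrary vector fields rather than only $\z_j$; your computation is more self-contained, needing nothing beyond torsion-freeness, \eqref{np} and the algebraic $J$-transfer rules, at the price of exactly the delicate sign bookkeeping you identify--which, as checked above, does come out right.
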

\begin{proof}
(i) From \eqref{nk1} together with the Cartan formula we get 
\begin{equation*}
0=L_{\z_j} \omega=\z_j \lrcorner \d \omega+\d (\z_j \lrcorner \omega)=
3\z_j \lrcorner \psi^{+}+\d (\z_j \lrcorner \omega).
\end{equation*}
Taking now the interior product with $\z_i$ yields 
\begin{equation*}
0=3\z_i \lrcorner \z_j \lrcorner \psi^{+}+\z_i \lrcorner \d (\z_j
\lrcorner \omega) 
\end{equation*}
and the claim follows by taking into account that 
$$\z_i \lrcorner
\d (\z_j \lrcorner \omega)=L_{\z_i}(\z_j \lrcorner \omega)-\d (\z_i
\lrcorner \z_j \lrcorner \omega)= 
\d \mu_{ij}.$$

(ii) Using (i) we can write
$$\psi^{+}(\z_i, \z_j, \z_k)=-\frac13\d \mu_{jk}(\z_i)= 
-\frac13L_{\z_i}(\omega(\z_j,\z_k))=0.$$

(iii) Follows directly from $L_{\z_i}J=0$ and the fact that the $\z_i$'s
mutually commute. 

(iv) On every almost Hermitian manifold, the Nijenhuis tensor 
$$N(X,Y):=[X,Y]+J[X,JY]+J[JX,Y]-[JX,JY]$$
can be expressed as
\beq \label{l1}N(X,Y)=J(L_XJ)Y-(L_{JX}J)Y
\eeq
for all vector fields $X,Y$.
On the other hand, \eqref{np} shows that on every NK manifold, the
Nijenhuis tensor satisfies 
\beq \label{l2}
N(X,Y)=J(\n_XJ)Y-J(\n_YJ)X-(\n_{JX}J)Y+(\n_{JY}J)X=-4Y\i JX\i\psp.
\eeq
Applying \eqref{l1} and \eqref{l2} to $X=\z_i$, and using
the fact that $L_{\z_i}J=0$ yields 
\beq\label{lz}(L_{J\z_i}J)=4J\z_i\i\psp.\eeq
This, together with (iii), finishes the proof.
\end{proof} 

\begin{lema}\label{??} If $\xi$ is a Killing vector field, $J\xi$
  cannot be Killing on any open set $U$. 
\end{lema}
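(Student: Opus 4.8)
The plan is to argue by contradiction: assume that both $\xi$ and $J\xi$ are Killing on some open set $U$, and show that this forces $\xi$ to vanish there. First I would reduce the hypothesis to a pointwise algebraic condition on $F:=\nabla\xi$. Differentiating $J\xi$ and using \eqref{np} together with the elementary identity $(Y\lrcorner\psi^+)\xi=-(\xi\lrcorner\psi^+)Y$ (skew-symmetry of $\psi^+$), one gets
\[
\nabla_Y(J\xi)=J(\nabla_Y\xi)-(\xi\lrcorner\psi^+)Y .
\]
Since $\xi\lrcorner\psi^+$ is skew-symmetric and $\xi$ is Killing (so $F$ is skew), the field $J\xi$ is Killing on $U$ if and only if $J\circ F$ is skew-symmetric, i.e.\ if and only if $F$ anticommutes with $J$: $JF+FJ=0$. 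Equivalently, $F$ lies pointwise in the $(-1)$-eigenspace of the $J$-action on $2$-forms, i.e.\ $F$ is of type $(2,0)+(0,2)$.

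At this point I would observe that essentially every first-order invariant one can build is automatically compatible with this condition: for instance $\langle F,\omega\rangle=0$, $\mathrm{div}(J\xi)=0$, and the Nijenhuis identity \eqref{l2} applied to $\xi$ all hold identically once $JF+FJ=0$. So the obstruction must be second order, and the key step is to differentiate $JF+FJ=0$ once more. Using $\nabla_XJ=X\lrcorner\psi^+$ from \eqref{np} and the Killing (Kostant) identity $\nabla_XF=-R(\xi,X)$, this yields a curvature relation of the shape
\[
J\,R(\xi,X)+R(\xi,X)\,J=\{\,X\lrcorner\psi^+,\,F\,\}\qquad\text{for all }X,
\]
whose left-hand side records exactly the $J$-commuting (i.e.\ $\mathfrak u(3)$) part of the curvature endomorphism $R(\xi,X)$.

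Finally I would feed in the special structure of the curvature of a $6$-dimensional strict NK manifold: it is Einstein with $\mathrm{Ric}=5g$, and the way $R$ interacts with $J$ is completely governed by the structure equations \eqref{nk1}--\eqref{nk2} (equivalently, by the explicit expression for $\nabla\psi^+$). Contracting the displayed identity with $\xi$, $J\xi$ and the defining forms $\omega,\psi^\pm$ should produce an overdetermined algebraic system for $F$ at each point of $U$ which, combined with $JF+FJ=0$, forces $F=0$. Once $\nabla\xi=F=0$ on $U$, the field $\xi$ is parallel there, so $R(\cdot,\cdot)\xi=0$ and hence $\mathrm{Ric}(\xi)=0$; but $\mathrm{Ric}=5g$ then gives $5\xi=0$, so $\xi\equiv0$ on $U$. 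Since a nontrivial Killing field cannot vanish on an open set, this is the desired contradiction. The main difficulty is precisely this last step: squeezing $F=0$ out of the curvature relation requires the \emph{precise} NK curvature identities, not merely the Einstein condition, and that is where essentially all of the work lies — the reduction in the first paragraph and the concluding Bochner-type argument are routine.
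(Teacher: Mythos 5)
Your opening reduction is correct: differentiating via \eqref{np} and using $(Y\lrcorner\psi^+)\xi=-(\xi\lrcorner\psi^+)Y$ gives $\nabla(J\xi)=J\circ F-\xi\lrcorner\psi^+$ with $F:=\nabla\xi$, so for $\xi$ Killing, $J\xi$ is Killing on $U$ precisely when $JF+FJ=0$, i.e.\ when the $2$-form $\mathrm{d}\xi$ is anti-invariant under $J$. The closing Bochner step ($F=0\Rightarrow\xi$ parallel $\Rightarrow R(\cdot,\cdot)\xi=0\Rightarrow\mathrm{Ric}(\xi,\xi)=0\Rightarrow\xi=0$ by the Einstein condition) is also sound. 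But everything in between --- the only part that actually proves the lemma --- is missing. You differentiate $JF+FJ=0$ to obtain $J\,R(\xi,X)+R(\xi,X)\,J=\{X\lrcorner\psi^+,F\}$ and then assert that contracting with $\xi$, $J\xi$, $\omega$, $\psi^\pm$ together with ``the precise NK curvature identities'' \emph{should} force $F=0$. No such identity is written down, no contraction is carried out, and it is not evident (nor argued) that the $\mathfrak{u}(3)$-part of $R(\xi,\cdot)$ is constrained tightly enough by the structure equations to kill $F$; indeed making this work would require Gray's curvature identities for nearly K\"ahler manifolds and a nontrivial algebraic analysis, and you yourself concede this is ``where essentially all of the work lies''. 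As it stands, this is a strategy outline with the decisive step unproved.

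For comparison, the paper never goes to second order at all. It quotes two \emph{first-order} identities from \cite{mns1} (Corollary 3.3 and Lemma 3.4 there), valid for every Killing field $\xi$ on a strict NK $6$-manifold: $\mathrm{d}(J\xi)=(\mathrm{d} J\xi)^{(2,0)}=-3\,\xi\lrcorner\psi^+$ and $(\mathrm{d}\xi)^{(2,0)}=-J\xi\lrcorner\psi^+$. If $J\xi$ were also Killing, the same identities applied to $J\xi$ (using $J(J\xi)=-\xi$) would give $(\mathrm{d}\xi)^{(2,0)}=3\,J\xi\lrcorner\psi^+$ and $(\mathrm{d} J\xi)^{(2,0)}=\xi\lrcorner\psi^+$; comparing the two pairs forces $\xi\lrcorner\psi^+=0$ and $J\xi\lrcorner\psi^+=0$, hence $\xi=0$, a contradiction. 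Note that your condition $JF+FJ=0$ is exactly the statement $\mathrm{d}\xi=(\mathrm{d}\xi)^{(2,0)}$, so the shortest repair of your argument is not to push into the curvature tensor but to combine your (correct) reduction with these first-order identities, or to reprove them --- which is precisely the route the paper takes.
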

\begin{proof}
From Corollary 3.3 and Lemma 3.4 in \cite{mns1} we have 
$$(\d J\xi)^{(2,0)}=\d J\xi=-\xi\i \d \omega=-3\xi\i\psp$$
and 
$$(\d \xi)^{(2,0)}=-J\xi\i\psp$$
for every Killing vector field $\xi$. If $J\xi$
were Killing on some open set, the same relations applied to $J\xi$
would read
$$(\d \xi)^{(2,0)}=3J\xi\i\psp$$
and 
$$(\d J\xi)^{(2,0)}=\xi\i\psp,$$
a contradiction.
\r

Assume from now on that the dimension of the torus acting by automorphisms satisfies $d\ge 2$. 

\begin{lema} \label{le1} For every $i\ne j$ in $\{1,\ldots,d\}$, the vector fields $\{\z_i,\z_j,J\z_i,J\z_j\}$
  are linearly independent on a dense open subset of $M$.
\end{lema}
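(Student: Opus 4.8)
The plan is to argue by contradiction, exploiting that linear independence of four vectors is an open condition: it suffices to show that the closed set on which $\{\z_i,\z_j,J\z_i,J\z_j\}$ are linearly dependent has empty interior. Suppose instead there is a nonempty open set $V$ on which these four fields are everywhere dependent. First I would pass to the region where $\z_i,\z_j$ are already pointwise independent: if $\z_i\wedge\z_j\equiv0$ on an open set, then on the dense subset where $\z_i\ne0$ one writes $\z_j=f\z_i$, and since both are Killing, $0=L_{\z_j}g=\d f\odot\z_i^\flat$, forcing $\d f=0$; then $\z_j-f\z_i$ is a nontrivial Killing field vanishing on an open set, hence identically, contradicting the linear independence of the $\z_i$. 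So I may assume $\z_i,\z_j$ are independent on $V$ and set $\mathcal D:=\mathrm{span}\{\z_i,\z_j\}$. The key algebraic observation is that $\mathcal D\cap J\mathcal D$ is $J$-invariant, hence even-dimensional; dependence of the four vectors forces $\mathcal D\cap J\mathcal D\ne0$, so it equals $\mathcal D$. Thus on $V$ the plane $\mathcal D$ is $J$-invariant, and I may write $J\z_i=a\z_i+b\z_j$, $J\z_j=c\z_i+e\z_j$, where $\bigl(\begin{smallmatrix}a&c\\b&e\end{smallmatrix}\bigr)$ squares to $-\Id$, so $e=-a$ and $a^2+bc=-1$.

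Next I would extract torus-invariance of the coefficients. Applying Lemma \ref{start}(iii) (including $i=j$) to each defining relation and using $[\z_i,\z_j]=0$ shows that $\z_i,\z_j$ annihilate $a,b,c,e$, whence $[J\z_i,J\z_j]=0$ on $V$. Feeding this into Lemma \ref{start}(iv) gives $\z_i\i\z_j\i\psp=0$, i.e.\ $\d\mu_{ij}=0$ by Lemma \ref{start}(i); equivalently $\psp(\z_i,\z_j,\cdot)\equiv0$, which also follows from $\ker(\z_i\i\psp)=\mathrm{span}\{\z_i,J\z_i\}\ni\z_j$. Solving the two relations $g(J\z_i,\z_i)=0$ and $g(J\z_i,\z_j)=\mu_{ij}$ then expresses the coefficients through the now constant $m:=\mu_{ij}$ and the metric coefficients $\lambda_{kl}:=g(\z_k,\z_l)$, namely $a=-\lambda_{ij}/m$, $b=\lambda_{ii}/m$ (and similarly for $c,e$), with $\lambda_{ii}\lambda_{jj}-\lambda_{ij}^2=m^2$.

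The goal is then to contradict Lemma \ref{??} by showing $J\z_i$ is Killing on $V$. Since $\z_i,\z_j$ are Killing, $L_{J\z_i}g=\d a\odot\z_i^\flat+\d b\odot\z_j^\flat$, and as the Gram matrix of $\z_i,\z_j$ is positive definite this tensor vanishes precisely when the transversal derivatives of $a,b$ vanish, i.e.\ when $\lambda_{ii},\lambda_{ij}$ are constant in the directions of $\mathcal D^\perp$ (they are already invariant along $\mathcal D$). \textbf{This transversal constancy is the main obstacle.} The $\mathcal D$-derivatives are automatic, but the $\mathcal D^\perp$-derivatives are genuinely second-order: I expect to obtain them by covariantly differentiating $J\z_i=a\z_i+b\z_j$ along $w\in\mathcal D^\perp$, substituting $\n_wJ=w\i\psp$ from \eqref{np} together with the Killing identities (which give the $\mathcal D$-component of $\n_w\z_i$ in terms of $w(\lambda_{kl})$), and forcing the outcome to vanish using the nearly Kähler curvature; alternatively, one may analyse the $(1,1)$-part of $\d\z_i^\flat$, since $\d\lambda_{ij}=-\z_j\i\d\z_i^\flat$ while its $(2,0)+(0,2)$-part $-J\z_i\i\psp$ contributes nothing on $V$. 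Once $J\z_i$ is shown to be Killing, Lemma \ref{??} applied to $\xi=\z_i$ gives the contradiction, so the dependence locus is nowhere dense.
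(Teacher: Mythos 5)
Your preliminary reductions are all correct, and they are carried out differently from the paper: the dichotomy disposing of the case $\z_j=f\z_i$, the observation that $\mathcal{D}\cap J\mathcal{D}$ is $J$-invariant so that pointwise dependence forces $J\mathcal{D}=\mathcal{D}$, the torus-invariance of the coefficients via Lemma \ref{start}(iii), the deduction $[J\z_i,J\z_j]=0$ and hence $\z_j\i\z_i\i\psp=0$ and $\d\mu_{ij}=0$ via (iv) and (i), and the algebraic formulas $a=-\lambda_{ij}/m$, $b=\lambda_{ii}/m$, $\lambda_{ii}\lambda_{jj}-\lambda_{ij}^2=m^2$ are all sound. (Note that $J$-invariance of $\mathcal{D}$ already gives $\mathcal{D}=\mathrm{span}\{\z_i,J\z_i\}$, i.e.\ exactly the normal form $\z_j=\alpha\z_i+\beta J\z_i$ from which the paper starts.) However, the proof is genuinely incomplete: the step you yourself flag as ``the main obstacle'' --- that $a$ and $b$ have vanishing derivative in the $\mathcal{D}^\perp$ directions, so that $J\z_i$ is Killing --- is precisely the analytical heart of the lemma, and what you offer for it (``I expect to obtain them by covariantly differentiating \dots\ and forcing the outcome to vanish using the nearly K\"ahler curvature'') is a plan, not an argument. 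In the paper this is exactly where all the work lies: a first covariant differentiation of the dependence relation yields only a \emph{relation} between the differentials of the coefficients (there, $\d b=J\d a$), and a second differentiation is needed, together with the fact that $\d a\i\psp$ anticommutes with $J$ while $(J\circ\n\d a)^{skew}$ commutes with it, before one can conclude that the coefficients are constant. Nothing in your sketch shows that a single differentiation ``forces'' anything to vanish, so the contradiction with Lemma \ref{??} is never actually reached.

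For what it is worth, the gap can be closed inside your own framework, without curvature, by pushing your use of Lemma \ref{start}(iv) one step further: apply the identity $(L_{J\z_i}J)=4\,J\z_i\i\psp$ (formula \eqref{lz}, established in the proof of that lemma) to \emph{transversal} vectors, not just to $J\z_j$. For $X\in\mathcal{D}^\perp$, expanding the left-hand side with $J\z_i=a\z_i+b\z_j$ and $[\z_k,JX]=J[\z_k,X]$ gives $(L_{J\z_i}J)X=X(a)J\z_i+X(b)J\z_j-(JX)(a)\z_i-(JX)(b)\z_j$, which lies in $\mathcal{D}$ (using $J\mathcal{D}=\mathcal{D}$ again); on the other hand the right-hand side $4(X\i J\z_i\i\psp)^\sharp$ is orthogonal to $\mathcal{D}$, because $\z_i$ and $\z_j$ lie in $\ker(J\z_i\i\psp)=\mathrm{span}\{\z_i,J\z_i\}=\mathcal{D}$. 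Hence both sides vanish for every $X$, so $J\z_i\i\psp$ annihilates $\mathcal{D}^\perp$ as well as $\mathcal{D}$, i.e.\ $J\z_i\i\psp=0$; since $Y\i\psp\neq 0$ for every $Y\neq 0$ on a strict NK manifold, $\z_i$ would vanish on an open set, which is impossible for a nonzero Killing field. Some such argument --- and not an appeal to unspecified curvature identities --- was required to finish the proof.
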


\begin{proof} One can of course assume $i=1,j=2$. 
If the contrary holds, there exists some open set $U$ on which $\z_1$
does not vanish and functions
$a,b:U\to \RM$ such that 
\beq\label{ab}\z_2=a\z_1+bJ\z_1.\eeq
We differentiate this
relation on $U$ with 
respect to the Levi-Civita covariant derivative $\n$ and obtain the
following relation between endomorphisms of $\T M$:
\bea \n\z_2&=&\d a\otimes \z_1+a\n \z_1+\d b\otimes J\z_1+b\n J\z_1\\
&=&\d a\otimes \z_1+a\n \z_1+\d b\otimes J\z_1-b\z_1\i\psp+bJ\circ(\n\z_1).
\eea
Taking the symmetric parts in this equation yields
$$0=\d a\odot \z_1+\d b\odot J\z_1+b(J\circ(\n\z_1))^{sym}.$$
Since $\n\z_1$ is skew-symmetric, $(J\circ(\n\z_1))^{sym}$ commutes
with $J$, whence $J$ commutes with $\d a\odot \z_1+\d b\odot J\z_1$. On
the other hand,  $J$ commutes with $\d a\odot \z_1+J\d a\odot J\z_1$,
thus it commutes with $(\d b-J\d a)\odot J\z_1$. This implies
$\d b=J\d a$. Differentiating this again with respect to $\n$ yields
$$\n \d b=\n(J\d a)=-\d a\i\psp+J\circ\n \d a.$$
Taking the skew-symmetric part in this equality shows that
$$\d a\i\psp=(J\circ\n \d a)^{skew}.$$
But the left hand side anti-commutes with $J$, whereas the right hand
side commutes with $J$ (since $\nabla\d a$ is symmetric). Thus $\d a=0$, so $a$
and $b$ are constants. From \eqref{ab}, we obtain that $J\z_1$ is a
Killing vector field on $U$, which is impossible by Lemma
\ref{l1}. This contradiction concludes the proof.
\r

\begin{coro}\label{c1} The vector fields
  $\{\z_1,\z_2,J\z_1,J\z_2,\z_1\i\z_2\i\psp,J\z_1\i\z_2\i\psp\}$
  are linearly independent on a dense open subset of $M$.
\end{coro}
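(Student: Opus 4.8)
The plan is to exploit the orthogonal splitting of $\T M$ provided by Lemma \ref{le1}. On the dense open set $U$ where $\{\z_1,\z_2,J\z_1,J\z_2\}$ are linearly independent, let $E$ be their span: a rank-$4$, $J$-invariant sub-bundle, so that $E^{\perp}$ has rank $2$ and is again $J$-invariant. Writing $v:=\z_1\i\z_2\i\psp$ and $w:=J\z_1\i\z_2\i\psp$, I will show that $\{v,w\}$ is a frame of $E^{\perp}$; since $\T M=E\oplus E^{\perp}$, linear independence of the six vectors on $U$ follows at once.

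First I would record the pointwise symmetries of $\psp$ encoded in the relation $JX\i\psp=(X\i\psp)\circ J=-J\circ(X\i\psp)$, namely $\psp(JX,Y,Z)=\psp(X,JY,Z)=\psp(X,Y,JZ)$, alongside total skew-symmetry. Since $g(v,\cdot)=\psp(\z_2,\z_1,\cdot)$, testing against $\z_1,\z_2,J\z_1,J\z_2$ gives zero in each case: the first two by skew-symmetry, the last two after moving the $J$ so as to create a repeated argument. Hence $v\in E^{\perp}$.

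Next, let $B$ be the skew-symmetric endomorphism with $g(BX,Y)=(\z_2\i\psp)(X,Y)$, so that $v=B\z_1$ and $w=BJ\z_1$. The relation $(\z_2\i\psp)\circ J=-J\circ(\z_2\i\psp)$ says precisely that $B$ anti-commutes with $J$, whence $w=BJ\z_1=-JB\z_1=-Jv$. In particular $w\in E^{\perp}$ too, and $\{v,w\}=\{v,-Jv\}$ is a frame of the $J$-invariant plane $E^{\perp}$ provided only that $v\neq0$.

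The crux is thus to rule out $v=0$ on $U$, and this is the step where more than tensor bookkeeping is needed. Here I would invoke that $\psp$ is stable, so that for every nonzero vector $X$ the $2$-form $X\i\psp=\n_XJ$ has rank $4$ with kernel exactly $\langle X,JX\rangle$ --- equivalently $(\n_XJ)^2Y=-|X|^2Y+g(X,Y)X+g(JX,Y)JX$ (cf. \cite{mns1}). Applied to $X=\z_2$, this identifies $\ker B=\langle\z_2,J\z_2\rangle$ and shows $B$ is injective on $(\ker B)^{\perp}$. On $U$ we have $\z_1\notin\langle\z_2,J\z_2\rangle$, so the component of $\z_1$ transverse to $\ker B$ is nonzero and $v=B\z_1\neq0$. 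Combining the three steps, $\{v,w\}$ spans $E^{\perp}$ and the six vectors are linearly independent throughout the dense open set $U$.
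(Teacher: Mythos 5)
Your proof is correct and follows essentially the same route as the paper's: orthogonality of the two contracted vectors to $\mathrm{span}\{\z_1,\z_2,J\z_1,J\z_2\}$ combined with their non-vanishing on the dense open set supplied by Lemma \ref{le1}. You simply make explicit two points the paper leaves implicit, namely the relation $w=-Jv$ (which gives independence of the pair whenever $v\neq 0$) and the identification $\ker(\z_2\i\psp)=\langle \z_2,J\z_2\rangle$ via the constant-type identity from \cite{mns1}.
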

\begin{proof}
This follows from Lemma \ref{le1} using the fact that the vectors
$\z_1\i\z_2\i\psp$ and $J\z_1\i\z_2\i\psp$ are orthogonal to
$\z_1,\z_2,J\z_1$ and $J\z_2$, and they are both non-vanishing at each
point where  $\{\z_1,\z_2,J\z_1,J\z_2\}$ are linearly independent.
\r

From now on we assume that $d\ge 3$. 

\begin{lema} \label{2.3} For every mutually distinct $1\le i,j,k\le d$, the $6$ vector fields $\z_i,$ $ \z_j,$ $\z_k$, $J\z_i,$
$J\z_j,$ $J\z_k$ are linearly independent on a dense open subset $M_0$  of $M$.
\end{lema}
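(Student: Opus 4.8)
The plan is to reduce the statement to the non-vanishing of a single function and then to imitate the differentiation argument of Lemma \ref{le1}. We may assume $(i,j,k)=(1,2,3)$. Let $M_1$ be the dense open set of Lemma \ref{le1} on which $\{\z_1,\z_2,J\z_1,J\z_2\}$ are linearly independent, and set $V:=\mathrm{span}\{\z_1,\z_2,J\z_1,J\z_2\}$ and $W:=V^{\perp}$; both are $J$-invariant, of ranks $4$ and $2$. Since the orthogonal projection onto $W$ commutes with $J$, at each point of $M_1$ the six vectors span $\T M$ exactly when $\z_3\notin V$: indeed the $W$-components of $\z_3$ and $J\z_3$ are $\z_3^W$ and $J\z_3^W$, which span $W$ iff $\z_3^W\ne0$. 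By Corollary \ref{c1}, $W$ is spanned by $u:=\z_1\i\z_2\i\psp$ and $Ju$, while Lemma \ref{start}(ii) gives $g(\z_3,u)=-\psp(\z_1,\z_2,\z_3)=0$; hence $\z_3^W\ne0$ is equivalent to $g(\z_3,Ju)\ne0$, and a short computation using $\psm=-\psp(J\cdot,\cdot,\cdot)$ shows $g(\z_3,Ju)=-f$, where $f:=\psm(\z_1,\z_2,\z_3)$. Thus it suffices to prove that $f$ does not vanish on any nonempty open subset of $M_1$; then $M_0:=M_1\cap\{f\ne0\}$ is the desired dense open set (intersecting over the finitely many triples yields one working for all of them).

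Next I would compute $\d f$. Using Cartan's formula together with the invariance \eqref{kh}, the relations $[\z_i,\z_j]=0$, the structure equation \eqref{nk2}, and $\z_i\i\omega=J\z_i$, one obtains
\[
\d f=4\bigl(\mu_{23}J\z_1-\mu_{13}J\z_2+\mu_{12}J\z_3\bigr).
\]
Assume, for contradiction, that $f\equiv0$ on a nonempty open set $U\subseteq M_1$. Then $\d f=0$ on $U$, and applying $-J$ gives the relation $\mu_{23}\z_1-\mu_{13}\z_2+\mu_{12}\z_3=0$ on $U$.

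The crucial point is that $\mu_{12}$ cannot be locally constant on $M_1$: by Lemma \ref{start}(i), $\d\mu_{12}=-3\,\z_1\i\z_2\i\psp=-3u$, which is nowhere zero on $M_1$ by Corollary \ref{c1}. In particular $\mu_{12}$ is not identically zero on $U$, so there is a nonempty open $U'\subseteq U$ with $\mu_{12}\ne0$, and there the previous relation becomes
\[
\z_3=\lambda\z_1+\nu\z_2,\qquad \lambda:=-\mu_{23}/\mu_{12},\quad \nu:=\mu_{13}/\mu_{12}.
\]
Differentiating with $\n$ and taking the symmetric part kills the skew-symmetric terms $\lambda\n\z_1+\nu\n\z_2$ and leaves $\d\lambda\odot\z_1+\d\nu\odot\z_2=0$. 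Since the Gram matrix of $\z_1,\z_2$ is invertible (Lemma \ref{le1}), this identity first forces $\n\lambda,\n\nu\in\mathrm{span}\{\z_1,\z_2\}$, and then, comparing the coefficients of the independent tensors $\z_1\odot\z_1$, $\z_1\odot\z_2$, $\z_2\odot\z_2$, it yields $\n\lambda=\beta\z_2$ and $\n\nu=-\beta\z_1$ for some function $\beta$. Finally, $\lambda$ and $\nu$ are invariant along $\z_1,\z_2$ (the $\mu_{pq}$ being invariant by \eqref{kh}), so $0=\d\lambda(\z_2)=\beta|\z_2|^2$, giving $\beta=0$ and hence $\n\lambda=\n\nu=0$. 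Thus $\lambda,\nu$ are constant on $U'$, and $\z_3=\lambda\z_1+\nu\z_2$ contradicts the $\RM$-linear independence of $\z_1,\z_2,\z_3$.

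The main obstacle is that $f\equiv0$ only produces the relation $\mu_{23}\z_1-\mu_{13}\z_2+\mu_{12}\z_3=0$, which is vacuous on the locus where all the $\mu_{pq}$ vanish — precisely the totally real configuration one is trying to exclude. This is resolved by Lemma \ref{start}(i) and Corollary \ref{c1}, which force $\d\mu_{12}=-3u\ne0$, so that $\mu_{12}$ is nonzero on a dense open set; this simultaneously discards the degenerate case and collapses the four-parameter dependence in $V$ into the two-term relation $\z_3\in\mathrm{span}\{\z_1,\z_2\}$, on which the symmetric-part trick of Lemma \ref{le1} applies in its simplest form.
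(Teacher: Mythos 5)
Your proof is correct, but its central mechanism is genuinely different from the paper's. The paper argues by contradiction directly on the dependence relation: it writes $\z_3=a_1\z_1+b_1J\z_1+a_2\z_2+b_2J\z_2$ on an open set, kills $b_1,b_2$ by taking Lie derivatives along $J\z_1,J\z_2$ (this is where Lemma \ref{start} (iii)--(iv) and Corollary \ref{c1} enter), and only then runs the symmetric-part-of-$\n$ argument on $\z_3=a_1\z_1+a_2\z_2$. You instead reduce the whole statement to the nonvanishing of the scalar $f=\psm(\z_1,\z_2,\z_3)$ --- which is exactly the function $\varepsilon$ of \eqref{eps}, the multi-moment map of $\d\psm$ --- via the orthogonal splitting of $\T M$ along $W=\mathrm{span}\{u,Ju\}$ and the automatic vanishing $g(\z_3,u)=0$ from Lemma \ref{start} (ii). Your formula $\d f=4(\mu_{23}J\z_1-\mu_{13}J\z_2+\mu_{12}J\z_3)$ is correct and is precisely Lemma \ref{step3} (ii) of the paper, except that you derive it without assuming the six fields form a frame, which is essential here to avoid circularity; combined with $\d\mu_{12}=-3u\ne 0$ on $M_1$, it collapses the degeneracy to the two-term relation $\z_3=\lambda\z_1+\nu\z_2$, after which your endgame coincides with the paper's (you get $\z_2(\lambda)=0$ from invariance of the $\mu_{pq}$ rather than from Lie-differentiating the relation, an inessential variation). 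What each route buys: yours avoids the bracket identities of Lemma \ref{start} (iii)--(iv) entirely, replaces them with the structure equation \eqref{nk2}, and yields sharper information --- the degeneracy locus inside $M_1$ is exactly $\{\varepsilon=0\}$, which anticipates the role $\varepsilon$ plays in Section 4; the paper's route is more self-contained relative to what has been established at that point in the text. Two minor citation remarks: the facts you attribute to Corollary \ref{c1} (nonvanishing of $u$ and its orthogonality to $V$ on all of $M_1$, plus $J\z_1\i\z_2\i\psp=-Ju$) live in that corollary's proof rather than its statement; and your final contradiction, exactly like the paper's, implicitly uses that a Killing vector field vanishing on a nonempty open set vanishes identically on the (connected) manifold.
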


\begin{proof}
We may assume that $i=1$, $j=2$ and $k=3$. Like before, if the statement does not hold, there exists some open
set $U$ on which $\z_1$ 
does not vanish and functions
$a_1,b_1,a_2,b_2:U\to \RM$ such that 
\beq\label{abc}\z_3=a_1\z_1+b_1J\z_1+a_2\z_2+b_2J\z_2.\eeq
 By Lemma \ref{le1}, one may assume that $\{\z_1,\z_2,J\z_1,J\z_2\}$
  are linearly independent on $U$. Taking the Lie derivative with
  respect to $J\z_1$ in \eqref{abc} and using Lemma \ref{start} (iii)
  and (iv) yields
$$0=J\z_1(a_1)\z_1+J\z_1(b_1)J\z_1+J\z_1(a_2)\z_2+
J\z_1(b_2)J\z_2+4b_2J\z_2\i\z_2\i\psp.
$$
From Corollary \ref{c1} we get $b_2=0$. Similarly, taking the Lie
derivative with respect to $J\z_2$ in \eqref{abc} we get
$b_1=0$. Therefore  \eqref{abc} becomes 
\beq\label{abc1}\z_3=a_1\z_1+a_2\z_2.\eeq
Differentiating this equation with respect to $\n$ and taking the
symmetric part yields
$$0=\d a_1\odot\z_1+\d a_2\odot\z_2.$$
Since $\z_1$ and $\z_2$ are linearly independent on $U$, this implies 
$\d a_1=c\z_2$ and $\d a_2=-c\z_1$ for some function $c:U\to\RM$. On the
other hand, taking the Lie derivative with respect to $\z_2$ in \eqref{abc1} 
yields $0=\z_2(a_1)\z_1+\z_2(a_2)\z_2$, thus $\z_2(a_1)=0$, so
finally $c|\z_2|^2=g(\d a_1,\z_2)=\z_2(a_1)=0$, whence $c=0$. This shows
that $a_1$ and $a_2$ are constant, contradicting the hypothesis that
$\z_1,\z_2$ and $\z_3$ are linearly independent Killing vector
fields. This proves the lemma.
\r

\begin{coro}\label{c35} The rank $d$ of the automorphism group of $M$ is at most $3$.
\end{coro}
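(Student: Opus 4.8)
The plan is to argue by contradiction: assume $d\ge 4$, and show that a fourth commuting pseudo-holomorphic Killing field $\z_4$ is forced to be a constant-coefficient combination of the others, contradicting Lemma~\ref{??}. First I would invoke Lemma~\ref{2.3} for the triple $(1,2,3)$: on the dense open set $M_0$ the six fields $\z_1,\z_2,\z_3,J\z_1,J\z_2,J\z_3$ are linearly independent, hence a frame of $\T M$ since $\dim M=6$. On any connected open $U\subseteq M_0$ I can therefore write
\beq
\z_4=\sum_{i=1}^3\big(a_i\z_i+b_iJ\z_i\big)
\eeq
for smooth functions $a_i,b_i$ on $U$, the whole point being to prove that these coefficients are constant.

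The easy input comes from differentiating along the $\z_k$ themselves. Taking the Lie derivative of the expansion along $\z_k$ and using $[\z_k,\z_i]=0$, $[\z_k,\z_4]=0$ together with Lemma~\ref{start}(iii), all the structural terms cancel and the right-hand side collapses to $\sum_i\z_k(a_i)\z_i+\sum_i\z_k(b_i)J\z_i$; the linear independence of the frame then gives $\z_k(a_i)=\z_k(b_i)=0$ for all $i,k$, so the coefficients are invariant under the torus. To pin them down completely I would combine two further identities. On one hand, the Lie derivative along $J\z_k$, via Lemma~\ref{start}(iii)--(iv), yields
\beq
0=\sum_{i=1}^3 J\z_k(a_i)\z_i+\sum_{i=1}^3 J\z_k(b_i)J\z_i+4\sum_{i=1}^3 b_i\big(J\z_i\i\z_k\i\psi^{+}\big)^\sharp.
\eeq
On the other hand, differentiating the expansion with $\n$, using \eqref{np} and the skew-symmetry of each $\n\z_i$ ($\z_i$ being Killing), and taking the symmetric part gives
\beq
0=\sum_{i=1}^3\d a_i\odot\z_i+\sum_{i=1}^3\d b_i\odot J\z_i+\sum_{i=1}^3 b_i\big(J\circ\n\z_i\big)^{\mathrm{sym}}.
\eeq
Pairing these two relations against the frame vectors converts them into a linear system for the unknowns $J\z_k(a_i)$, $J\z_k(b_i)$ and $b_i$, whose only solution is that all $a_i,b_i$ are constant.

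Granting constancy, the finish is clean: $\sum_i b_iJ\z_i=\z_4-\sum_i a_i\z_i$ is then Killing, being a constant combination of Killing fields. Setting $\xi:=\sum_i b_i\z_i$, which is Killing, its image $J\xi=\sum_i b_iJ\z_i$ would be Killing on $U$; by Lemma~\ref{??} this forces $\xi\equiv0$, hence all $b_i=0$ by independence of the $\z_i$, and finally $\z_4=\sum_i a_i\z_i$ with constant $a_i$ contradicts the linear independence of $\z_1,\ldots,\z_4$. This proves $d\le 3$.

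I expect the constancy step to be the main obstacle, and for a structural reason. In Lemma~\ref{2.3} the analogous expansion lived in the \emph{proper} subspace spanned by $\{\z_1,\z_2,J\z_1,J\z_2\}$, so the cross terms $\big(J\z_i\i\z_k\i\psi^{+}\big)^\sharp$ pointed outside it by Corollary~\ref{c1} and their coefficients had to vanish immediately. Here the frame already exhausts $\T M$, so these cross terms remain inside it and cannot simply be discarded; one is forced to exploit the $\n$-symmetric-part relation in tandem with the $J\z_k$-Lie-derivative identities, and the delicate point is verifying that the resulting linear system is non-degenerate so that $b_i\equiv0$ can be extracted.
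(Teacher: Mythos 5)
Your setup and your closing argument are fine, but the proof has a genuine gap exactly where you locate it yourself: the constancy of the coefficients (equivalently, the vanishing of the $b_i$) is never actually established. The two identities you derive --- the $J\z_k$-Lie-derivative relation and the $\n$-symmetric-part relation --- do not form a closed linear system whose non-degeneracy can simply be checked: their ``coefficients'' involve the unknown geometric quantities $(J\z_i\i\z_k\i\psp)^\sharp$ and $(J\circ\n\z_i)^{\mathrm{sym}}$ expanded in the frame, and nothing in your argument controls these functions. In Lemma \ref{le1} and Lemma \ref{2.3} the analogous step succeeds only because the cross terms point \emph{outside} the span of the fields being compared (Corollary \ref{c1}), and, as you correctly observe, that mechanism is unavailable here since the frame exhausts $\T M_0$. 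Asserting that ``the only solution is that all $a_i,b_i$ are constant'' is therefore not a proof step but a restatement of what has to be shown; as written, the argument does not go through.

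The missing idea in the paper is algebraic rather than differential: one applies Lemma \ref{start} (ii) to triples containing $\z_4$. Since $\psi^{+}(\z_1,\z_2,\z_4)=0$ and $\psi^{+}(X,JX,\cdot)=0$ for every $X$ (because $\psi^+$ has type $(3,0)+(0,3)$), substituting the expansion of $\z_4$ kills every term except one and gives $b_3\,\psi^{+}(\z_1,\z_2,J\z_3)=0$. If $b_3\not\equiv 0$, then on a nonempty open set the $1$-form $\psi^{+}(\z_1,\z_2,\cdot)$ would annihilate all six frame vectors $\z_1,\z_2,\z_3,J\z_1,J\z_2,J\z_3$, hence vanish there, contradicting Corollary \ref{c1}; thus $b_1=b_2=b_3\equiv0$ with no analysis of derivatives at all. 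Only then are Lie derivatives along $\z_i$ and $J\z_i$ invoked: with the cross terms gone, they show the $a_j$ are constant, so $\z_4$ is a constant combination of $\z_1,\z_2,\z_3$, the desired contradiction. Note also that once the $b_i$ vanish identically you do not need Lemma \ref{??} at the end: constant $a_j$ already contradict the linear independence of $\z_1,\ldots,\z_4$.
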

\begin{proof}
Assume for a contradiction that $d\ge 4$, so there exist 4 linearly independent mutually commuting Killing vector fields $\z_1,\ldots,\z_4$ on $M$ preserving the almost complex structure $J$. From Lemma \ref{2.3}, there exist functions $a_i$ and $b_i$ ($i=1,2,3$) on $M_0$ such that 
\beq\label{1234}\z_4=\sum_{j=1}^3a_j\z_j+b_jJ\z_j.\eeq
From Lemma \ref{start} (ii) we get $\psi^{+}(\z_1, \z_2, \z_3)=\psi^{+}(\z_1, \z_2, \z_4)=0$. Using \eqref{1234} together with the fact that $\psi^+(X,JX,\cdot)=0$ for every $X$, we get $b_3\psi^{+}(\z_1, \z_2, J\z_3)=0$. 

Assume that $b_3$ is not identically zero on $M$. Then $\psi^{+}(\z_1, \z_2, J\z_3)=0$ on some non-empty open set $U$. 
On the other hand, the $1$-form $\psi^{+}(\z_1, \z_2,\cdot)$ vanishes when applied to $\z_1$, $J\z_1$, $\z_2$, $J\z_2$ and $\z_3$, so by Lemma \ref{2.3}, $\psi^{+}(\z_1, \z_2,\cdot)$ vanishes on the non-empty open set $U\cap M_0$. This contradicts Corollary \ref{c1}. Consequently $b_3\equiv 0$, and similarly $b_2= b_1 \equiv 0$. We thus get 
\beq\label{12345}\z_4=\sum_{j=1}^3a_j\z_j.\eeq
Taking the Lie derivative in \eqref{12345} with respect to $\z_i$ and $J\z_i$ for $i=1,2,3$ and using Lemma \ref{start} (iii) we obtain $\z_i(a_j)=J\z_i(a_j)=0$ for every $i,j\in\{1,2,3\}$, so $a_j$ are constant on $M_0$, thus showing that $\z_4$ is a linear combination of $\z_1,\z_2,\z_3$, a contradiction. 
\r

\section{Toric NK structures}

In view of Corollary \ref{c35} we can now introduce the following:

\begin{defi}
A $6$-dimensional strict NK manifold is called toric if its automorphism group has rank $3$, or equivalently, if it carries $3$ linearly independent mutually commuting pseudo-holomorphic Killing vector fields $\z_1,\z_2,\z_3$.
\end{defi}

Assume from now on that $(M^6,g,J,\z_1,\z_2,\z_3)$ is a toric NK manifold and consider on the dense open subset $M_0$ given by Lemma \ref{2.3} the basis $\{ \theta^1, \theta^2, \theta^3,
\gamma^1,\gamma^2,\gamma^3 \}$ of $\Lambda^1M_0$ dual 
to $\{\z_1,\z_2,\z_3, J\z_1, J\z_2, J\z_3 \}$, together with the function
\begin{equation} \label{eps}\varepsilon:=\psi^{-}(\z_1, \z_2, \z_3).
\end{equation} 
For further use, let us
also introduce the symmetric $3\times 3$ matrix 
\beq\label{mc}C:=(C_{ij})=(g(\z_i,\z_j)).\eeq

As a direct consequence of Lemma \ref{2.3}, we have that $\z +J\z=\T M_0$,
where $\z$ is the $3$-dimensional distribution
spanned by $\z_k, 1 \le k \le 3$. This enables us to express $\psp$,
and $\psm$ in terms of the basis $\{ \theta^i, \gamma^j \}$ and of the
function $\varepsilon$, simply by checking that the two terms are
equal when applied to elements of the basis $\{\z_i, J\z_j\}$ of $\T M_0$:
\begin{equation} \label{psis}
\begin{split}
\psi^{+}=&\varepsilon(\gamma^{123}- \theta^{12} \wedge \gamma^3-\theta^{31}
\wedge \gamma^2-\theta^{23} \wedge \gamma^1),\\ 
\psi^{-}=& \varepsilon(\theta^{123}-\gamma^{12} \wedge \theta^3-\gamma^{31}
\wedge \theta^2-\gamma^{23} \wedge \theta^1),
\end{split}
\end{equation}
where here and henceforth the notation $\gamma ^{123}$ stands for
$\gamma ^1\wedge \gamma ^2\wedge \gamma ^3$ etc.
Recalling the definition of $\mu_{ij}:=\omega(\z_i, \z_j)$, the fundamental 2-form $\omega:=g(J\cdot,\cdot)$ can be expressed by the formula:
\begin{equation}\label{omega}
\omega=\sum \limits_{1 \le i < j \le 3
}\mu_{ij}(\theta^{ij}+\gamma^{ij})+\sum \limits_{i=1}^3  \theta^i
\wedge c^i 
\end{equation}
where the $1$-forms $c^i$ in $\Lambda^1(J\z^*)$ are given by $c^i=\sum
\limits_{j=1}^{3} C_{ij}\gamma^j$. A short computation yields  
\begin{equation}\label{omega3}
\omega^3=-6 \theta^{123} \wedge c^{123}+6
\theta^{123}\wedge c\wedge\eta, 
\end{equation}
where $\eta$ in $\Lambda^2(J\z^*)$ is given by 
$$\eta:=\sum \limits_{1 \le
  i<j\le 3} \mu_{ij} \gamma^{ij}$$ 
and $c$ in $\Lambda^1(J\z^*)$ is given
by 
$$c:=\mu_{23}c^1+\mu_{31}c^2+\mu_{12}c^3.$$
Therefore from the compatibility relations \eqref{comp} it follows that 
\begin{equation} \label{vol}
c^{123}={\varepsilon^2}\gamma^{123}+c\wedge\eta,
\end{equation}
which is equivalent to
\begin{equation} \label{vol1} \det C=\varepsilon^2+^t\!V CV, 
\end{equation}
where we denote by
\begin{equation} \label{vv}V:=\biggl (\begin{array}{c}
\mu_{23}\\
\mu_{31}\\
\mu_{12}
\end{array} \biggr ).
\end{equation}
\begin{lema} \label{step3}
The following relations hold:
\begin{itemize}
\item[(i)] $\d \mu_{12}=-3\varepsilon \gamma^3,\ \d \mu_{31}=-3\varepsilon
  \gamma^2,\ \d \mu_{23}=-3\varepsilon \gamma^1$; 
\item[(ii)] $ \d \varepsilon=4c.$
\end{itemize}
\end{lema}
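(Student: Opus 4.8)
For (i) the plan is simply to combine Lemma \ref{start}(i), which reads $\d\mu_{ij}=-3\z_i\i\z_j\i\psp$, with the explicit formula \eqref{psis} for $\psp$. Since $\z_k\i\theta^i=\delta_{ki}$ and $\z_k\i\gamma^i=0$, only the terms of
$$\psp=\varepsilon(\gamma^{123}-\theta^{12}\wedge\gamma^3-\theta^{31}\wedge\gamma^2-\theta^{23}\wedge\gamma^1)$$
that carry two $\theta$-factors survive a double contraction, and a one-line computation gives $\z_1\i\z_2\i\psp=\varepsilon\gamma^3$. The cyclic invariance of $\psp$ under $1\mapsto2\mapsto3\mapsto1$ then yields $\z_2\i\z_3\i\psp=\varepsilon\gamma^1$ and $\z_3\i\z_1\i\psp=\varepsilon\gamma^2$, and multiplying by $-3$ gives (i). This step is routine.

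For (ii) I would first write the function $\varepsilon=\psm(\z_1,\z_2,\z_3)$ as the iterated interior product $\varepsilon=\z_3\i\z_2\i\z_1\i\psm$ and compute $\d\varepsilon$ by peeling the contractions off one at a time with Cartan's formula $L_{\z_i}\alpha=\d(\z_i\i\alpha)+\z_i\i\d\alpha$. At each step the Lie-derivative term drops out: the $\z_i$ mutually commute, so $L_{\z_i}$ commutes with every $\z_j\i(\cdot)$, while $L_{\z_i}\psm=0$ by \eqref{kh}. Carrying this out three times gives
$$\d\varepsilon=-\z_3\i\z_2\i\z_1\i\d\psm.$$

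Next I would invoke the structure equation \eqref{nk2}, namely $\d\psm=-2\,\omega\wedge\omega$, to rewrite this as $\d\varepsilon=2\,\z_3\i\z_2\i\z_1\i\omega^2$. The remaining work is to evaluate this triple contraction, which I would do iteratively from $\z_i\i(\omega\wedge\omega)=2(\z_i\i\omega)\wedge\omega$ together with $\z_i\i\omega=\omega(\z_i,\cdot)$ and $\omega(\z_i,\z_j)=\mu_{ij}$. Three contractions produce
$$\z_3\i\z_2\i\z_1\i\omega^2=2\bigl(\mu_{23}\,\z_1\i\omega+\mu_{31}\,\z_2\i\omega+\mu_{12}\,\z_3\i\omega\bigr),$$
and I would then expand each $\z_i\i\omega$ in the basis $\{\theta^j,\gamma^j\}$ by means of \eqref{omega}, which gives $\z_i\i\omega=c^i$ plus a combination of the $\theta^j$ coming from the $\mu_{ij}(\theta^{ij}+\gamma^{ij})$ terms.

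The one genuinely delicate point, which I expect to be the main obstacle, is this final expansion: one must check that the $\theta$-components of $\mu_{23}\,\z_1\i\omega+\mu_{31}\,\z_2\i\omega+\mu_{12}\,\z_3\i\omega$ cancel completely, so that only $\mu_{23}c^1+\mu_{31}c^2+\mu_{12}c^3=c$ survives. This cancellation rests on the antisymmetry $\mu_{ij}=-\mu_{ji}$, which makes the $\theta$-contributions pair up as $\mu_{23}\mu_{12}-\mu_{12}\mu_{23}$ and similar, and I would double-check the signs carefully here, since the order of the contractions and the identification of $1$-forms with vectors via $g$ are exactly where errors tend to appear. Granting the cancellation, $\z_3\i\z_2\i\z_1\i\omega^2=2c$ and hence $\d\varepsilon=4c$, which is (ii).
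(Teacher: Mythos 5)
Your proposal is correct and follows essentially the same route as the paper: part (i) is Lemma \ref{start}(i) (which the paper rederives inline via the Cartan formula) evaluated against \eqref{psis}, and part (ii) is the triple Cartan-formula computation $\d\varepsilon=-\z_3\i\z_2\i\z_1\i\d\psm=2\,\z_3\i\z_2\i\z_1\i\omega^2$ followed by expansion in the basis $\{\theta^j,\gamma^j\}$. The cancellation of the $\theta$-components that you flag as the delicate point does work out exactly as you describe (the paper states the identity $2\,\z_3\i\z_2\i\z_1\i\omega^2=4c$ without detail, but your pairing $\mu_{23}\mu_{12}-\mu_{12}\mu_{23}$, etc., is precisely the justification), so your argument is complete.
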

\begin{proof}
(i) Using \eqref{nk1}, \eqref{psis} and the Cartan formula we can write
$$\d \mu_{12}=\d (\z_2\i\z_1\i\omega)=\z_2\i\z_1\i \d \omega=3\z_2\i\z_1\i
\psp=-3\varepsilon \gamma^3.$$ 
The other formulas are similar.

(ii) Using \eqref{nk2}, \eqref{omega} and the Cartan formula again, we get
\bea \d \varepsilon&=&\d (\z_3\i\z_2\i\z_1\i\psm)=-\z_3\i\z_2\i\z_1\i
\d \psm\\
&=&2\z_3\i\z_2\i\z_1\i\omega
^2=4(\mu_{23}c^1+\mu_{31}c^2+\mu_{12}c^3).
\eea

\end{proof}
We will now show that Equation \eqref{nk2} is equivalent to 
some exterior system involving the 1-forms $\theta ^i$.
\begin{lema} \label{thetas}
Equation \eqref{nk2} holds if and only if the
forms $\theta_i, 1 \le i 
\le 3$ satisfy the differential system: 
\begin{equation}\label{sy}
\begin{split}
\frac{1}{4}\varepsilon \d \theta^1=& c^2 \wedge c^3-\mu_{23} \eta \\
\frac{1}{4}\varepsilon \d \theta^2=& c^3 \wedge c^1-\mu_{31} \eta \\
\frac{1}{4}\varepsilon \d \theta^3=& c^1 \wedge c^2-\mu_{12} \eta 
\end{split}
\end{equation}
\end{lema}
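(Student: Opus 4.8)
The plan is to compute both sides of \eqref{nk2} explicitly in the coframe $\{\theta^i,\gamma^j\}$ and to match them according to their \emph{type}, i.e.\ the number of $\theta$- versus $\gamma$-factors. The decisive structural input is that $\d\theta^i\in\Lambda^2(J\z^*)$ for each $i$: by the Cartan formula $\d\theta^i(X,Y)=-\theta^i([X,Y])$ whenever $\theta^i(X),\theta^i(Y)$ are constant, so $\d\theta^i(\z_j,\z_k)=0$ because $[\z_j,\z_k]=0$, and $\d\theta^i(\z_j,J\z_k)=0$ by Lemma \ref{start} (iii); hence only the components $\d\theta^i(J\z_j,J\z_k)$ survive. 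In the same spirit, $\varepsilon$ and the $\mu_{ij}$ are invariant under the flows of the $\z_i$, so $\d\varepsilon$, $\d\mu_{ij}$ and the $\gamma^i$ all have vanishing $\theta$-component; differentiating $\gamma^i=-\tfrac1{3\varepsilon}\d\mu_{jk}$ (Lemma \ref{step3} (i)) yields $\d\gamma^i=-\tfrac1\varepsilon\,\d\varepsilon\wedge\gamma^i$, and hence $\d\gamma^{jk}=-\tfrac2\varepsilon\,\d\varepsilon\wedge\gamma^{jk}$.

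First I would differentiate \eqref{psis}, writing $\psm=\varepsilon\theta^{123}-\sum_{(ijk)}\varepsilon\,\gamma^{jk}\wedge\theta^i$ (cyclic in $(ijk)$) and substituting the formulas above. The point is that the only surviving $\d\theta^i$ terms collect into the pure type $(2\theta,2\gamma)$ expression $\varepsilon\,\d(\theta^{123})=\varepsilon(\theta^{23}\wedge\d\theta^1+\theta^{31}\wedge\d\theta^2+\theta^{12}\wedge\d\theta^3)$: every other occurrence of $\d\theta^i$ is of the shape $\gamma^{jk}\wedge\d\theta^i$, which vanishes because it would involve four $\gamma$'s in the three-dimensional space $J\z^*$. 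The remaining terms of $\d\psm$, built only from $\d\varepsilon$ and the coframe, are of types $(3\theta,1\gamma)$ and $(1\theta,3\gamma)$.

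Next I would expand $2\omega^2$ from \eqref{omega} by splitting $\omega=A+\eta+D$, with $A=\sum_{i<j}\mu_{ij}\theta^{ij}$, $\eta=\sum_{i<j}\mu_{ij}\gamma^{ij}$ and $D=\sum_i\theta^i\wedge c^i$. Since $A^2=\eta^2=0$ (wedge-squares of $2$-forms supported on only three $1$-forms), one gets $\omega^2=2A\eta+D^2+2AD+2\eta D$, where $2A\eta+D^2$ is of type $(2,2)$ while $2AD$ and $2\eta D$ are of types $(3,1)$ and $(1,3)$. Matching $\d\psm=-2\omega^2$ type by type, the $(3,1)$- and $(1,3)$-components each reduce to the single identity $\d\varepsilon=4c$, which is already available as Lemma \ref{step3} (ii) and so holds automatically. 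A direct computation of the $(2,2)$-component gives
\[
-2\omega^2\big|_{(2,2)}=4\big[\theta^{23}\wedge(c^2\wedge c^3-\mu_{23}\eta)+\theta^{31}\wedge(c^3\wedge c^1-\mu_{31}\eta)+\theta^{12}\wedge(c^1\wedge c^2-\mu_{12}\eta)\big],
\]
and equating this with $\varepsilon\,\d(\theta^{123})$ and using the linear independence of $\theta^{23},\theta^{31},\theta^{12}$ (which forces the three accompanying $2$-forms in $\Lambda^2(J\z^*)$ to agree separately) yields exactly \eqref{sy}. Conversely, if \eqref{sy} holds then the $(2,2)$-components agree, while the other two types agree by $\d\varepsilon=4c$, so $\d\psm=-2\omega^2$.

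The main obstacle is purely the bookkeeping in the last two computations: controlling signs and the reindexing $\theta^{13}=-\theta^{31}$, $\mu_{13}=-\mu_{31}$ when assembling $2A\eta+D^2$, and checking that the $(3,1)$/$(1,3)$ pieces genuinely collapse to $\d\varepsilon=4c$ rather than to some stronger constraint. Once the type decomposition and the vanishing $\gamma^{jk}\wedge\d\theta^i=0$ are in place, isolating \eqref{sy} from the $(2,2)$-part is immediate.
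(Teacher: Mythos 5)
Your proof is correct -- I checked the key assertions ($\d\theta^i\in\Lambda^2(J\z^*)$, $\d\gamma^{jk}=-\tfrac{2}{\varepsilon}\,\d\varepsilon\wedge\gamma^{jk}$, your formula for the $(2,2)$-part of $-2\omega^2$, and the claim that the $(3,1)$ and $(1,3)$ components of \eqref{nk2} are each equivalent to $\d\varepsilon=4c$) and they all hold -- but it follows a genuinely different route from the paper's. The paper never computes $\d\psm$ at all: for the forward direction it contracts \eqref{nk2} with pairs of Killing fields, using invariance and the Cartan formula to rewrite $\z_2\i\z_1\i\d\psm$ as $\d(\z_2\i\z_1\i\psm)=\d(\varepsilon\theta^3)$, computes $-2\z_2\i\z_1\i\omega^2$ algebraically from \eqref{omega}, and cancels the $\d\varepsilon\wedge\theta^3$ term via Lemma \ref{step3} (ii); for the converse it observes that \eqref{nk2} is equivalent to those contractions together with the single contraction by $J\z_1,J\z_2,J\z_3$, which it asserts is ``automatically fulfilled'' by a straightforward calculation. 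Your type decomposition replaces this contraction argument by a full expansion of $\d\psm$ in the coframe; this costs you the auxiliary formulas for $\d\gamma^{jk}$ (hence Lemma \ref{step3} (i) as background), but it buys an explicit proof of exactly the step the paper leaves unproved: the ``automatic'' contraction is your $(1,3)$-component, which you correctly identify as reducing to the identity $\d\varepsilon=4c$, the same identity that governs the $(3,1)$-component. One caveat, shared with the paper's own proof and hence not a gap relative to it: in the ``if'' direction both arguments invoke Lemma \ref{step3} (ii), which the paper derived from \eqref{nk2} itself; the equivalence is therefore understood within a framework where $\d\mu_{jk}=-3\varepsilon\gamma^i$ and $\d\varepsilon=4c$ are granted independently of \eqref{nk2} -- which is precisely the situation in Section 5, where the converse is applied and these identities hold by construction of $\gamma^i$ and $\varepsilon$.
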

\begin{proof}

Assume that \eqref{nk2} holds. By \eqref{psis}
\begin{equation}\label{ep}
\z_2 \lrcorner \z_1 \lrcorner \psi^{-}=\varepsilon \theta^3.
\end{equation}
Since $\z_k, 1 \le k \le 3$ are commuting Killing vector fields
preserving the whole $\SU(3)$-structure, 
\eqref{omega} yields
$$ \d (\z_2 \lrcorner \z_1 \lrcorner \psi^{-})=\z_2 \lrcorner \z_1
\lrcorner \d \psi^{-}=-2 \z_2 \lrcorner \z_1
\lrcorner (\omega \wedge \omega)=-4\theta ^3\wedge
c-4\mu_{12}\eta+4c^1\wedge c^2.
$$
hence by \eqref{ep} and Lemma \ref{step3} (ii) we get
$$\frac{1}{4}\varepsilon \d \theta^3=\frac{1}{4}\d (\varepsilon
\theta^3)-\frac{1}{4}\d \varepsilon \wedge\theta^3 =-\theta ^3\wedge
c-\mu_{12}\eta+c^1\wedge c^2-c\wedge\theta ^3=c^1\wedge
c^2-\mu_{12}\eta.$$
The proof of the two other relations is similar.

Conversely, we notice that \eqref{nk2} holds if and only if 
$$\begin{cases}
\z_i \lrcorner \z_j \lrcorner \d \psi^{-}=-2\z_i \lrcorner \z_j
\lrcorner \omega ^2,\qquad\forall\ 1\le i,j\le 3,\\
J\z_1\i J\z_2\i J\z_3\i \d \psi^{-}=-2J\z_1\i J\z_2\i J\z_3\i \omega ^2.
\end{cases}
$$
The first relation was just shown to be equivalent to \eqref{sy}. It
remains to check, by a straightforward calculation, that the second
relation is automatically fulfilled. 

\end{proof}
We finally interpret 
Equation \eqref{nk1} in terms of the frame $\{c^i\}$.   
\begin{lema} \label{cs} Equation \eqref{nk1} holds if and only if
  \eqref{vol} holds and the
  forms $\varepsilon c^k$ are closed for $1 \le k \le 3$. 
\end{lema}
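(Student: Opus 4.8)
The plan is to test the $3$-form equation \eqref{nk1}, $\d\omega=3\psp$, against the frame $\{\z_1,\z_2,\z_3,J\z_1,J\z_2,J\z_3\}$ on $M_0$. Both $\omega$ and $\psp$ are invariant under the torus generated by the $\z_i$, hence so is the $3$-form $\Phi:=\d\omega-3\psp$; and a torus-invariant $3$-form is completely determined by its three contractions $\z_i\i\Phi$ (which recover all of its terms containing at least one $\theta^i$) together with its $\gamma^{123}$-coefficient $\Phi(J\z_1,J\z_2,J\z_3)$. Thus \eqref{nk1} is equivalent to the simultaneous vanishing of these pieces, and I will show that the contractions encode the closedness of the forms $\varepsilon c^k$, while the $\gamma^{123}$-coefficient encodes \eqref{vol}.

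For the contractions I would use that each $\z_i$ is Killing and preserves $\omega$, so that by the Cartan formula $\z_i\i\d\omega=-\d(\z_i\i\omega)$. From \eqref{omega} one reads off $\z_i\i\omega=\sum_k\mu_{ik}\theta^k+c^i$, hence
\[\z_i\i\Phi=-\sum_k\d\mu_{ik}\wedge\theta^k-\sum_k\mu_{ik}\,\d\theta^k-\d c^i-3\z_i\i\psp.\]
Substituting the values of $\d\mu_{ik}$ from Lemma \ref{step3}(i) and computing $\z_i\i\psp$ from \eqref{psis}, the terms $-\sum_k\d\mu_{ik}\wedge\theta^k$ cancel exactly against $3\z_i\i\psp$, so the contraction collapses to $\z_i\i\Phi=-(\d c^i+\sum_k\mu_{ik}\d\theta^k)$. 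I would then rewrite $\sum_k\mu_{ik}\d\theta^k$ by means of the system \eqref{sy} of Lemma \ref{thetas}: a short manipulation in which the $\eta$-terms cancel in pairs and the identity $c=\mu_{23}c^1+\mu_{31}c^2+\mu_{12}c^3$ is used yields $\sum_k\mu_{ik}\d\theta^k=\tfrac{4}{\varepsilon}\,c\wedge c^i$. Combining this with $\d\varepsilon=4c$ from Lemma \ref{step3}(ii) gives the clean formula $\z_i\i\Phi=-\tfrac1\varepsilon\,\d(\varepsilon c^i)$, so the three contractions vanish precisely when each $\varepsilon c^k$ is closed.

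It remains to treat the $\gamma^{123}$-coefficient. Here $3\psp(J\z_1,J\z_2,J\z_3)=3\varepsilon$ by \eqref{psis}, so I must isolate the pure $\gamma^{123}$ part of $\d\omega$ and compare it to $3\varepsilon$. Once the contractions above are known to vanish, the obstruction is a single scalar $f$ with $\Phi=f\,\gamma^{123}$, which can be extracted by wedging with $\theta^{123}$. Using \eqref{sy} one checks that each $\d\theta^i$ lies purely in $\Lambda^2(J\z^*)$ and that $\sum_i\d\theta^i\wedge c^i=\tfrac{4}{\varepsilon}\,(3\,c^{123}-c\wedge\eta)$; together with the contributions $\sum_{i<j}\d\mu_{ij}\wedge\gamma^{ij}=-9\varepsilon\gamma^{123}$ and the $\d\gamma^{ij}$-terms of $\d\eta$, this reduces the matching $f=3\varepsilon$ to the single algebraic relation $c^{123}=\varepsilon^2\gamma^{123}+c\wedge\eta$, that is, to \eqref{vol} (equivalently the determinant identity \eqref{vol1}).

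The main obstacle is this last step. Unlike the $\z_i$-contractions, the $\gamma^{123}$-coefficient cannot be simplified by Cartan's formula, since the $J\z_i$ are not Killing (cf. Lemma \ref{??}); one is therefore forced to control the pure $\gamma$-parts of $\d\gamma^{j}$, which are governed by the $J\z_i$--$J\z_j$ brackets of Lemma \ref{start}(iv) and ultimately by the inverse of the frame Gram matrix $\left(\begin{smallmatrix}C&-\mu\\ \mu&C\end{smallmatrix}\right)$. Keeping track of this volume normalisation throughout is exactly what makes \eqref{vol} surface as the precise obstruction, and the difficulty is one of careful bookkeeping rather than of any conceptual subtlety: in the direct implication \eqref{vol} holds automatically from the $\SU(3)$-compatibility \eqref{comp} underlying \eqref{psis}, while in the converse it must be assumed in order to license \eqref{psis} and force $f=3\varepsilon$, the closedness of the $\varepsilon c^k$ then supplying all the remaining differential content.
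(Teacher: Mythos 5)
Your proof is correct and takes essentially the same route as the paper: both decompose \eqref{nk1} into its contractions with the $\z_i$ (simplified via the Cartan formula, Lemma \ref{step3} and the system \eqref{sy}, yielding closedness of the $\varepsilon c^i$) and its residual component on $\Lambda^3 J\z$ (which, using \eqref{sy} again, reduces exactly to \eqref{vol}). Your identity $\z_i\i(\d\omega-3\psp)=-\varepsilon^{-1}\d(\varepsilon c^i)$ is just a repackaging of the paper's two-step computation, and the final coefficient matching is the paper's observation that the $\Lambda^3 J\z$ part of $\d\omega=3\psp$ reads $\d\eta+\sum_k \d\theta^k\wedge c^k=3\varepsilon\gamma^{123}$.
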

\begin{proof} 
Taking the interior product with $\z_1$ in \eqref{nk1} and using
\eqref{psis}, \eqref{omega} and Lemma \ref{step3} (i) yields
\bea 3\varepsilon(-\theta ^2\wedge\gamma ^3+\theta ^3\wedge\gamma
^2)&=&3\z_1\i\psp=\z_1\i \d \omega=-\d (\z_1\i\omega)=-\d (\mu_{12}\theta
^2-\mu_{31}\theta ^3+c^1)\\
&=&3\varepsilon\gamma ^3\wedge\theta ^2 -\mu_{12}\d \theta ^2
-3\varepsilon\gamma ^2\wedge\theta ^3 +\mu_{31}\d \theta ^3-\d c^1,
\eea
whence 
$$\d c^1=\mu_{31}\d \theta ^3 -\mu_{12}\d \theta ^2.$$
From Lemma \ref{thetas} and  Lemma \ref{step3} (ii) we thus obtain
\bea \d (\varepsilon c^1)&=&4c\wedge c^1+4[\mu_{31}(c^1 \wedge c^2-\mu_{12}\eta)-
\mu_{12}(c^3 \wedge c^1-\mu_{31}\eta)]\\
&=&4(\mu_{23}c^1+\mu_{31}c^2+\mu_{12}c^3)\wedge c^1+4(\mu_{31}c^1 \wedge c^2-
\mu_{12}c^3 \wedge c^1)=0.
\eea

Conversely, we notice that \eqref{nk1} holds if and only if 
$$\begin{cases}
\z_i \lrcorner \d \omega=3\z_i 
\lrcorner \psp,\qquad\forall\ 1\le i\le 3,\\
J\z_1\i J\z_2\i J\z_3\i \d \omega=3J\z_1\i J\z_2\i J\z_3\i \psp.
\end{cases}
$$
We have just shown that the first equation is equivalent to
$\varepsilon c^k$ being closed. 
The component of $\d  \omega=3\psi^{+}$ on $\Lambda^3 J\z$ is given by 
$$ \d \eta+\sum \limits_{k=1}^3 \d \theta^k \wedge c^k=3\varepsilon \gamma^{123},
$$
so using \eqref{sy}, the second equation is equivalent to \eqref{vol1}.
\end{proof}

Let us now consider the 3-dimensional quotient $U:=M_0\slash \z$ of the open set
$M_0$ by the action of the $3$-dimensional torus generated by the
Killing vector fields $\z_i$. Clearly the natural projection $\pi:M\to U$ is a
submersion. We shall now interpret the 
geometry of the situation down on $U$. Since $\z_i(\mu_{jk})=0$, there
exist functions $y_i$ on $U$ such that $\pi ^*y_1=\mu_{23},\pi
^*y_2=\mu_{31},\pi ^*y_3=\mu_{12}$. Moreover, since $\varepsilon$ does
not vanish on $M_0$, Lemma \ref{step3} (i) shows that $\{y_i\}$ define
a global coordinate system on $U$. From now on we will identify the
projectable functions or exterior forms on $M$ with their projection
on $U$. Since everything is local, we may suppose that $U$ is contractible. 

\begin{rema}
By Lemma \ref{start} (i) it follows that the map $\mu:M\to\Lambda^2\RM^3\cong\mathfrak{so}(3)$ defined by 
$$ \mu:=  \begin{pmatrix}
0 & \mu_{12} & \mu_{13} \\
\mu_{21} & 0 & \mu_{23} \\
\mu_{31} & \mu_{32} & 0
\end{pmatrix}=\pi^*\begin{pmatrix}
0 & y_3 & -y_2 \\
-y_3 & 0 & y_1  \\
y_2 & -y_1 & 0
\end{pmatrix}
$$
is the multi-moment map of the strong geometry $(M,\psi^+)$ defined by Madsen and Swann in \cite{ms} and studied further by Dixon \cite{d} in the particular case where $M=S^3\times S^3$. Similarly, the function $\varepsilon$ can be seen as the multi-moment map associated to the closed 4-form $\d\psi^-$.
These maps will play an important role in Sections 5 and 6 below. 
\end{rema}

\begin{pro}
There exists a function $\f$ on $U$ (defined up to an affine function)
such that 
$\Hess(\f)=C$ in the coordinates $\{y_i\}$.
\end{pro}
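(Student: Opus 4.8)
The plan is to obtain $\f$ by integrating twice on the contractible three-manifold $U$, so that the statement reduces to the Poincaré lemma once a single integrability condition has been verified. First I would record that $C$ is projectable and hence genuinely defines a field of symmetric matrices in the coordinates $\{y_i\}$: since the $\z_k$ are mutually commuting Killing vector fields, $\z_k(C_{ij})=(L_{\z_k}g)(\z_i,\z_j)+g([\z_k,\z_i],\z_j)+g(\z_i,[\z_k,\z_j])=0$. A function $\f$ with $\Hess(\f)=C$ exists exactly when $\partial_{y_k}C_{ij}$ is symmetric in $k$ and $j$ (it is automatically symmetric in $i$ and $j$), and essentially all the content sits at this point.

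The key step is to extract this integrability condition from Lemma \ref{cs}. Using Lemma \ref{step3} (i) and the identifications $\pi^*y_1=\mu_{23}$, $\pi^*y_2=\mu_{31}$, $\pi^*y_3=\mu_{12}$, one gets $\varepsilon\gamma^i=-\tfrac13\,\d y_i$, and therefore
\[
\varepsilon c^i=\sum_{j=1}^3 C_{ij}\,\varepsilon\gamma^j=-\tfrac13\sum_{j=1}^3 C_{ij}\,\d y_j .
\]
Now Lemma \ref{cs} asserts that each $\varepsilon c^i$ is closed, so $\sum_j \d C_{ij}\wedge \d y_j=0$; expanding $\d C_{ij}=\sum_k(\partial_{y_k}C_{ij})\,\d y_k$ and collecting the coefficient of $\d y_k\wedge\d y_j$ yields precisely $\partial_{y_k}C_{ij}=\partial_{y_j}C_{ik}$. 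Equivalently, the $1$-forms $\beta^i:=\sum_j C_{ij}\,\d y_j=-3\varepsilon c^i$ are closed on $U$.

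In the final step I would integrate. Because $U$ is contractible and each $\beta^i$ is closed, the Poincaré lemma gives functions $\alpha_i$ on $U$ with $\d\alpha_i=\beta^i$, i.e. $\partial_{y_j}\alpha_i=C_{ij}$. The $1$-form $\alpha:=\sum_i\alpha_i\,\d y_i$ is then itself closed, since $\partial_{y_j}\alpha_i-\partial_{y_i}\alpha_j=C_{ij}-C_{ji}=0$ by symmetry of $C$; applying the Poincaré lemma once more produces $\f$ with $\d\f=\alpha$, whence $\partial_{y_i}\partial_{y_j}\f=\partial_{y_j}\alpha_i=C_{ij}$, that is $\Hess(\f)=C$. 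For uniqueness, if $\f_1$ and $\f_2$ both have Hessian $C$ then $\f_1-\f_2$ has vanishing Hessian on the connected set $U$ and is therefore affine. The main obstacle is the integrability condition $\partial_{y_k}C_{ij}=\partial_{y_j}C_{ik}$, and the whole point of this plan is that it is not an extra computation but is exactly the closedness of $\varepsilon c^i$ already furnished by Lemma \ref{cs}.
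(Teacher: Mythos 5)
Your proof is correct and follows essentially the same route as the paper: both arguments use Lemma \ref{cs} (closedness of the forms $\varepsilon c^i$, rewritten via Lemma \ref{step3}~(i) as closedness of $\sum_j C_{ij}\,\d y_j$) to integrate once, then use the symmetry of $C$ to show the resulting $1$-form $\sum_i \alpha_i\,\d y_i$ is closed, and integrate a second time on the contractible set $U$ to obtain $\f$. Your explicit verification that $C$ is projectable and your spelling out of the integrability condition $\partial_{y_k}C_{ij}=\partial_{y_j}C_{ik}$ are details the paper leaves implicit, but the underlying argument is identical.
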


\begin{proof}
From Lemma \ref{cs}, there exist functions $f_i$ on $U$ such that
$\d f_i=\varepsilon c^i$ for $1\le i\le 3$. Notice that by Lemma
\ref{step3} (i), this is
equivalent to 
\beq\label{de}\frac{\del f_i}{\del y_j}=-\frac13 C_{ij}.
\eeq
From Lemma \ref{step3} (i)
we get
$$\d (\sum_{i=1}^3f_i\d y_i)= \sum_{i=1}^3\d f_i\wedge \d y_i=-3
\sum_{i=1}^3\varepsilon c^i\wedge \varepsilon \gamma^i=
\sum_{i,j=1}^3\varepsilon ^2C_{ij}\gamma^j\wedge \gamma^i=0,$$
so there exists some function $\f$ such that 
$$\d \f=-3\sum_{i=1}^3f_i\d y_i.$$
This means that $\frac{\del \f}{\del y_i}=-3f_i$, which together with
\eqref{de} finishes the proof.
\r

Let us introduce the operator $\dr$ of radial differentiation, acting
on functions on $U$ by 
$$\dr f:=\sum_{i=1}^3 y_i\frac{\del f}{\del y_i}.$$

\begin{pro} The function $\f$ can be chosen in such a way that
\beq\label{f} \varepsilon ^2=\frac83(\f-\dr \f) .\eeq
\end{pro}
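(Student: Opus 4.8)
The plan is to prove \eqref{f} by differentiation: since everything lives on the connected (indeed contractible) set $U$, it suffices to show that the $1$-forms $\d(\varepsilon^2)$ and $\tfrac83\,\d(\f-\dr\f)$ coincide, and then to remove the resulting integration constant using the affine ambiguity in $\f$. All computations take place in the global coordinates $\{y_i\}$, where I will use $\Hess(\f)=C$, the relation $\frac{\del \f}{\del y_i}=-3f_i$ from the previous proposition, and $\frac{\del f_i}{\del y_j}=-\frac13 C_{ij}$ from \eqref{de}.

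For the left-hand side I would start from Lemma \ref{step3} (ii), which gives $\d\varepsilon=4c=4\sum_i y_i c^i$, so that $\tfrac12\d(\varepsilon^2)=\varepsilon\,\d\varepsilon=4\sum_i y_i\,\varepsilon c^i=4\sum_i y_i\,\d f_i$. Substituting $\d f_i=-\tfrac13\sum_j C_{ij}\,\d y_j$ from \eqref{de} yields
\[
\d(\varepsilon^2)=-\tfrac{8}{3}\sum_{i,j} y_i C_{ij}\,\d y_j .
\]
For the right-hand side, I differentiate $\dr\f=\sum_i y_i\,\frac{\del \f}{\del y_i}$ and use $\Hess(\f)=C$ to get $\frac{\del}{\del y_k}(\dr\f)=\frac{\del \f}{\del y_k}+\sum_i y_i C_{ik}$. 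Hence the first-order terms cancel in $\f-\dr\f$, leaving $\frac{\del}{\del y_k}(\f-\dr\f)=-\sum_i y_i C_{ik}$, that is $\d(\f-\dr\f)=-\sum_{i,k} y_i C_{ik}\,\d y_k$. Comparing the two expressions gives $\d(\varepsilon^2)=\tfrac83\,\d(\f-\dr\f)$, so that $\varepsilon^2-\tfrac83(\f-\dr\f)$ is constant on $U$.

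It remains to absorb this constant. Replacing $\f$ by $\f+\ell$ with $\ell=\sum_i a_i y_i+b$ affine preserves $\Hess(\f)=C$, and since $\dr\ell=\sum_i a_i y_i$ one has $(\f+\ell)-\dr(\f+\ell)=(\f-\dr\f)+b$; thus $\f-\dr\f$ shifts by the pure constant $b$, which can be chosen to produce \eqref{f}. The computations are routine: the single point requiring care is exactly this last observation, namely that although $\f$ carries a full affine ambiguity, the quantity $\f-\dr\f$ only changes by an additive constant, which is precisely what makes the integration constant removable.
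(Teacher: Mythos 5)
Your proof is correct and follows essentially the same route as the paper: both arguments show that $\d(\varepsilon^2)$ and $\tfrac83\,\d(\f-\dr\f)$ equal $-\tfrac83\sum_{i,j}C_{ij}y_i\,\d y_j$ using Lemma \ref{step3} and $\Hess(\f)=C$, and then invoke the affine ambiguity of $\f$ to kill the integration constant. The only cosmetic difference is that you route the left-hand side through $\d f_i$ via \eqref{de} and spell out the constant-absorption step, which the paper leaves implicit in the phrase ``it is clearly enough.''
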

\begin{proof}
It is clearly enough to show that the exterior derivatives of the two
terms coincide. Since 
$$\frac{\del (\dr\f)}{\del y_j}=\sum_{i=1}^3\frac{\del^2\f}{\del y_i\del
  y_j}y_i+\frac{\del \f}{\del y_j},$$
Lemma \ref{step3} yields
$$-\frac83\d (\dr \f-\f)=-\frac83\sum_{i,j=1}^3
C_{ij}y_i\d y_j=8\sum_{i,j=1}^3 C_{ij}y_i\varepsilon \gamma
^j=8\varepsilon c=\d (\varepsilon ^2).$$
\r

Summing up, we get the following result:

\begin{coro}
The function $\f$ given in the previous proposition satisfies the
equation
\begin{equation} \label{ma} \det
  (\Hess(\f))=\frac83\f-\frac{11}3\dr\f+\dr ^2\f.
\end{equation}
\end{coro}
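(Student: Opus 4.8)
The plan is to assemble the equation purely algebraically from the three ingredients already in hand: the identity \eqref{vol1}, namely $\det C=\varepsilon^2+{}^tV CV$, together with the two preceding propositions giving $\Hess(\f)=C$ and $\varepsilon^2=\frac83(\f-\dr\f)$. The first of these immediately yields $\det(\Hess(\f))=\det C$, so everything reduces to rewriting the right-hand side of \eqref{vol1} in terms of $\f$ and its radial derivatives.

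First I would substitute the two propositions into \eqref{vol1}. Replacing $\varepsilon^2$ by $\frac83(\f-\dr\f)$ leaves me with
\beq\label{proofstep}
\det(\Hess(\f))=\frac83(\f-\dr\f)+{}^tV CV,
\eeq
so the only remaining task is to express the quadratic term ${}^tV CV$ through $\f$. Here I would use the crucial observation that, after identifying projectable functions with their projections to $U$, the vector $V$ from \eqref{vv} is exactly the coordinate vector $(y_1,y_2,y_3)^t$, since $\pi^*y_1=\mu_{23}$, $\pi^*y_2=\mu_{31}$, $\pi^*y_3=\mu_{12}$. Combined with $C_{ij}=\frac{\del^2\f}{\del y_i\del y_j}$ from $\Hess(\f)=C$, this gives ${}^tV CV=\sum_{i,j=1}^3 y_iy_j\,\frac{\del^2\f}{\del y_i\del y_j}$.

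The last step is the elementary Euler-type computation relating this double sum to the radial derivatives. Differentiating $\dr\f=\sum_i y_i\frac{\del\f}{\del y_i}$ radially once more produces $\dr^2\f=\dr\f+\sum_{i,j}y_iy_j\frac{\del^2\f}{\del y_i\del y_j}$, whence ${}^tV CV=\dr^2\f-\dr\f$. Feeding this back into \eqref{proofstep} yields $\det(\Hess(\f))=\frac83\f-\frac83\dr\f-\dr\f+\dr^2\f$, and gathering the coefficients of $\dr\f$ gives exactly \eqref{ma}. There is no genuine obstacle in this argument: it is a direct chaining of the established facts, and the only point demanding any care is the identification of $V$ with the coordinate vector and the sign bookkeeping in the radial-derivative identity, both of which are routine.
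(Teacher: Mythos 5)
Your proposal is correct and coincides with the paper's own argument: the proof there is exactly the Euler-type identity $\dr^2\f=\dr\f+{}^t V C V$ combined with \eqref{vol1} and \eqref{f}, which is what you do. The coefficient bookkeeping ($-\tfrac83-1=-\tfrac{11}3$) and the identification of $V$ with the coordinate vector $(y_1,y_2,y_3)^t$ are both handled correctly.
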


\begin{proof} We have 
\beq\label{ns}\dr ^2\f=\dr\bigg(\sum_{i=1}^3
y_i\frac{\del \f}{\del y_i}\bigg)=\sum_{i=1}^3
y_i\frac{\del \f}{\del y_i}+\sum_{i,j=1}^3 y_iy_j\frac{\del^2\f}{\del
  y_i\del y_j} 
=\dr\f+^t\!VCV,\eeq
so \eqref{ma} is a consequence of \eqref{vol1} and \eqref{f}. 
\r
\bigskip

\section{The inverse construction}
\medskip

In this section we will show that conversely, every solution $\f$ of
Equation \eqref{ma} on some open set $U\subset \RM^3$ defines 
a NK structure with 3 linearly independent
commuting Killing vector fields on $U_0\times\TM^3$, where $U_0$ is
some open subset of $U$. More precisely, let $y_1,y_2,y_3$ be the
standard coordinates on $U$ and let $\mu$ be the $3\times 3$
skew-symmetric matrix 
\beq\label{mmu} \mu:= \begin{pmatrix}
0 & y_3 & -y_2 \\
-y_3 & 0 & y_1  \\
y_2 & -y_1 & 0
\end{pmatrix}. 
\eeq
Define the $6\times 6$ symmetric matrix 
$$D:=\begin{pmatrix}\Hess(\f)&-\mu\cr  \mu &\Hess(\f)\end{pmatrix}.$$
Let $U_0$ denote the open set 
\beq\label{up}
U_0:=\{x\in U\ |\ \f(x)-\dr\f(x)>0\ \hbox{and}\ D\ \hbox{is
  positive definite}\}.
\eeq
The next result is straightforward:
\begin{lema} \label{pos-big}
The matrix $D$ is positive definite if and only if 
\begin{itemize}
\item[(i)] $C=\Hess(\f)$ is positive definite and
\item[(ii)] $\langle \mu a,b\rangle^2 <\langle Ca,a \rangle \langle Cb,b \rangle$
for all $(a,b)\in(\mathbb{R}^3\times \mathbb{R}^3)\setminus{(0,0)}$.
\end{itemize}
\end{lema}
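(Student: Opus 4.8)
The plan is to reduce the matrix statement to a single scalar inequality by computing the quadratic form attached to the symmetric matrix $D$. Writing a general vector of $\RM^3\times\RM^3$ as $v=(a,b)$ and using that $\mu$ is skew-symmetric (so that $\langle \mu b,a\rangle=-\langle \mu a,b\rangle$), a direct block computation gives
\beq
\langle Dv,v\rangle=\langle Ca,a\rangle+\langle Cb,b\rangle+2\langle \mu a,b\rangle=:Q(a,b),\qquad v=(a,b).
\eeq
Thus $D$ is positive definite if and only if $Q(a,b)>0$ for every $(a,b)\ne(0,0)$, and the whole lemma amounts to showing that this positivity is equivalent to (i) together with (ii).

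For the necessity of (i) and (ii) I would first treat the degenerate directions: setting $b=0$ gives $Q(a,0)=\langle Ca,a\rangle$, so positivity forces $C$ to be positive definite, which is (i); the case $a=0$ is identical. For (ii) (where it is understood that $a$ and $b$ are both nonzero, since otherwise the right-hand side vanishes identically) I would exploit the scaling of the cross term: replacing $(a,b)$ by $(ta,t^{-1}b)$ with $t>0$ leaves $\langle \mu a,b\rangle$ fixed and turns $Q$ into $t^2\langle Ca,a\rangle+t^{-2}\langle Cb,b\rangle+2\langle \mu a,b\rangle$. Minimizing over $t>0$ (the minimum is attained because $\langle Ca,a\rangle,\langle Cb,b\rangle>0$ by (i)) yields the sharp bound $2\sqrt{\langle Ca,a\rangle\langle Cb,b\rangle}+2\langle \mu a,b\rangle>0$; applying the same bound to $(-a,b)$, which flips the sign of the cross term, gives $\sqrt{\langle Ca,a\rangle\langle Cb,b\rangle}>|\langle \mu a,b\rangle|$, and squaring produces exactly (ii).

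For the converse I would assume (i) and (ii). If $a=0$ or $b=0$, positivity of $Q$ is immediate from (i). If both are nonzero, then by (i) the two diagonal terms are strictly positive, so the arithmetic--geometric mean inequality gives $\langle Ca,a\rangle+\langle Cb,b\rangle\ge 2\sqrt{\langle Ca,a\rangle\langle Cb,b\rangle}$, while (ii) gives $\sqrt{\langle Ca,a\rangle\langle Cb,b\rangle}>|\langle \mu a,b\rangle|\ge -\langle \mu a,b\rangle$; combining the two yields $Q(a,b)>0$. This closes the equivalence in both directions.

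The computation is entirely elementary, which is why the statement is labelled straightforward, so there is no genuine obstacle; the only point requiring care is the book-keeping at the boundary of the domain. Condition (ii) as written cannot hold when $a$ or $b$ vanishes (both sides are then zero), so it has to be read as quantifying over $a\ne 0$ and $b\ne 0$, with the vanishing directions being precisely those controlled by (i). Making explicit that (i) governs the degenerate directions while (ii) governs the generic ones, and that the AM--GM bound used in the converse is exactly the bound saturated by the scaling argument in the forward direction, is what knits the two conditions into a single equivalence.
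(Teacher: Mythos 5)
Your proposal is correct and complete. The paper itself gives no argument for this lemma (it is introduced with ``The next result is straightforward''), and your computation of the quadratic form
$\langle D(a,b),(a,b)\rangle=\langle Ca,a\rangle+\langle Cb,b\rangle+2\langle \mu a,b\rangle$, followed by the scaling argument $(a,b)\mapsto(ta,t^{-1}b)$ in one direction and AM--GM plus (ii) in the other, is exactly the natural argument the authors presumably had in mind. You are also right to flag the quantifier in (ii): as literally stated it fails when $a=0$ or $b=0$ (both sides vanish, so the strict inequality cannot hold), and the condition must be read over pairs with $a\neq 0$ and $b\neq 0$, with (i) covering the degenerate directions --- a genuine, if minor, imprecision in the paper's formulation that your write-up repairs.
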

%\begin{proof}
%By definition $\langle D(a,b), (a,b)\rangle= $
%\end{proof}
On $U_0$ we define a positive function $\varepsilon$ by \eqref{f},
1-forms $\gamma ^i$ by $\d y_i=-3\varepsilon \gamma ^i$ and a 2-form
$\eta:=y_1\gamma ^2\wedge\gamma ^3+y_2\gamma ^3\wedge\gamma
^1+y_3\gamma ^1\wedge\gamma ^2$. We denote 
as before by $C$ the Hessian of $\f$ and define $c^i:=\sum_{j=1}^3
C_{ij}\gamma ^j$. 

\begin{lema} \label{3.1} The following hold:
\begin{itemize}
\item[(i)] The $1$-forms $\varepsilon c^i$ are exact.
\item[(ii)] The $2$-forms $\tau_1:=(c^2 \wedge c^3-y_1
  \eta)/\varepsilon$, $\tau_2:=(c^3 \wedge 
c^1-y_2 \eta)/\varepsilon$ and $\tau_3:=(c^1 \wedge c^2-y_3 \eta)/\varepsilon$
are closed.
\end{itemize}
\end{lema}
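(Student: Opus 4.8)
The plan is to prove both claims by reducing them to identities that were already established (or are readily re-derivable) in Section 4, where the same objects arose on $M_0$. The key conceptual point is that in the inverse construction we have \emph{defined} the objects $\varepsilon, \gamma^i, \eta, C, c^i$ on $U_0\subset\RM^3$ so as to satisfy, by fiat, the relations that on $M_0$ were consequences of the NK equations. So the strategy is to check that the purely computational identities underlying Lemma \ref{cs} and Lemma \ref{thetas} go through verbatim, now reading them as statements about forms on the base $U_0$.

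For part (i), I would argue exactly as in the Proposition that produces $\f$. The definition $\d y_i=-3\varepsilon\gamma^i$ together with $c^i=\sum_j C_{ij}\gamma^j$ gives $\varepsilon c^i=\sum_j C_{ij}\,\varepsilon\gamma^j=-\tfrac13\sum_j C_{ij}\,\d y_j$. Since $C_{ij}=\del^2\f/\del y_i\del y_j=\del(\del\f/\del y_j)/\del y_i$, the $1$-form $\sum_j C_{ij}\d y_j=\d(\del\f/\del y_j)$... more precisely $\sum_j C_{ij}\,\d y_j = \d\!\left(\frac{\del\f}{\del y_i}\right)$ because the Hessian is symmetric and the partials commute. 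Hence $\varepsilon c^i=-\tfrac13\,\d\!\left(\frac{\del\f}{\del y_i}\right)$ is exact, with explicit primitive $f_i:=-\tfrac13\,\del\f/\del y_i$. This is immediate and carries no obstacle.

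For part (ii) I would mimic the computation in the proof of Lemma \ref{cs}. First note that from the definition of $\varepsilon$ via \eqref{f} one recovers $\d(\varepsilon^2)=8\varepsilon c$, equivalently $\d\varepsilon=4c$ with $c:=y_1c^1+y_2c^2+y_3c^3$, exactly as in Lemma \ref{step3} (ii); this is the same differentiation of $\tfrac83(\f-\dr\f)$ carried out in the second Proposition. Then I would compute $\d(\varepsilon\tau_k)=\d(c^i\wedge c^j-y_k\eta)$ for the appropriate cyclic $(i,j,k)$. Using part (i), each $\varepsilon c^i$ is closed, so $\d c^i=-\tfrac{\d\varepsilon}{\varepsilon}\wedge c^i=-\tfrac{4}{\varepsilon}c\wedge c^i$; substituting and using $\d y_k=-3\varepsilon\gamma^k$ and the explicit form of $\eta$, the terms should cancel precisely as in the displayed calculation $\d(\varepsilon c^1)=4c\wedge c^1+4[\cdots]=0$ appearing in Lemma \ref{cs}. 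Concretely, $\d(\varepsilon\tau_k)=\d(\varepsilon c^i\wedge c^j/\varepsilon)\cdots$ — the cleaner route is to write $\varepsilon\tau_k=c^i\wedge c^j-y_k\eta$ and show $\d(c^i\wedge c^j-y_k\eta)=\tfrac{\d\varepsilon}{\varepsilon}\wedge(c^i\wedge c^j-y_k\eta)$, i.e. $\d(\varepsilon\tau_k)=4c\wedge\tau_k$, which combined with closedness would instead need $\tau_k$ itself closed; so the efficient formulation is to verify directly that $\d\tau_k=0$ by expanding $\d((c^i\wedge c^j-y_k\eta)/\varepsilon)$ and collecting terms.

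The main obstacle is purely bookkeeping: getting the cyclic indices and signs in $\tau_1,\tau_2,\tau_3$ to match the pattern $c^2\wedge c^3,\ c^3\wedge c^1,\ c^1\wedge c^2$ and $y_1,y_2,y_3$ correctly, and correctly differentiating the quotients by $\varepsilon$ using $\d\varepsilon=4c$. There is no genuine analytic difficulty, since nothing here uses the Monge--Amp\`ere equation \eqref{ma} — that equation will only enter later, to guarantee that the resulting $2$-form $\omega$ is the fundamental form of a genuine $\SU(3)$-structure via \eqref{vol1}. The only subtlety worth flagging is that $\varepsilon>0$ on $U_0$ by the defining condition $\f-\dr\f>0$ in \eqref{up}, so dividing by $\varepsilon$ and the forms $\gamma^i=-\tfrac1{3\varepsilon}\d y_i$ are well defined; I would note this explicitly at the start so that all the manipulations take place on the set where everything is smooth.
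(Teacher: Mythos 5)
Part (i) of your argument is correct and coincides with the paper's proof: $\varepsilon c^i=-\tfrac13\sum_j C_{ij}\,\d y_j=\d\bigl(-\tfrac13\,\del\f/\del y_i\bigr)$, using only the definitions of $\gamma^j$ and $C=\Hess(\f)$.

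Part (ii) has a genuine gap, located exactly at the step you wave off as bookkeeping. You claim that nothing in this lemma uses the Monge--Amp\`ere equation \eqref{ma}, and that after substituting $\d\varepsilon=4c$ and $\d(\varepsilon c^i)=0$ the terms ``cancel precisely as in'' the displayed computation of Lemma \ref{cs}. They do not. If you carry out your own plan you find
\begin{equation*}
\d\tau_1=\frac{12\,y_1}{\varepsilon^2}\,\bigl(\varepsilon^2-\det C+{}^t\!VCV\bigr)\,\gamma^{123},
\end{equation*}
and cyclically for $\tau_2,\tau_3$. Hence closedness of the $\tau_k$ is \emph{equivalent} to the identity $\det C=\varepsilon^2+{}^t\!VCV$, i.e.\ to \eqref{vol1}; and since in the inverse construction $\varepsilon$ is defined by \eqref{f} and $C=\Hess(\f)$, relation \eqref{vol1} is, via \eqref{ns}, exactly the Monge--Amp\`ere equation \eqref{ma}. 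This is precisely how the paper argues: it computes $\d(\varepsilon^3\tau_1)=12y_1\varepsilon^3\gamma^{123}$ and $\d(\varepsilon^3)\wedge\tau_1=12\varepsilon y_1(\det C-{}^t\!VCV)\gamma^{123}$, and the equality of these two quantities (whence $\varepsilon^3\d\tau_1=0$) is explicitly attributed to \eqref{f}, \eqref{ma} and \eqref{ns}. So Lemma \ref{3.1}\,(ii) is the very place where the Monge--Amp\`ere equation does its work; it cannot be postponed to the later verification of the $\SU(3)$ compatibility conditions.

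The analogy with Lemma \ref{cs} misleads you for a structural reason. There, the cancellation in $\d(\varepsilon c^1)=4c\wedge c^1+4[\cdots]=0$ is obtained after substituting the system \eqref{sy} for $\d\theta^2,\d\theta^3$, and \eqref{sy} is available because \eqref{nk2} is assumed to hold on the NK manifold. On $U_0\subset\RM^3$ there are no forms $\theta^k$ yet: producing local primitives $\sigma_k$ with $\d\sigma_k=4\tau_k$ is the entire purpose of part (ii), and the missing input must be supplied by \eqref{ma}. For a generic $\f$ with $\Hess(\f)>0$ and $\f-\dr\f>0$ that does not satisfy \eqref{ma}, the forms $\tau_k$ are simply not closed, so no amount of index bookkeeping can complete your argument.
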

\begin{proof}
(i) We have:
$$\d \bigg(-\frac13\frac{\del \f}{\del
  y_i}\bigg)=-\frac13\sum_{j=1}^3\frac{\del ^2 
  \f}{\del y_i\del y_j}\d y_j =-\frac13\sum_{j=1}^3C_{ij}\d y_j=\varepsilon c^i.$$

(ii) We first compute using (i):
\bea \d (\varepsilon ^3\tau_1)&=&\d (\varepsilon ^2(c^2 \wedge c^3-y_1
\eta))= -\d (y_1\varepsilon ^2\eta)\\
&=&-\d (y_1^2\varepsilon ^2\gamma ^{23}+y_1y_2\varepsilon ^2\gamma
^{31}+y_1y_3\varepsilon ^2\gamma ^{12})=12y_1\varepsilon ^3\gamma
^{123}.
\eea
On the other hand, 
\bea \d (\varepsilon ^3)\wedge\tau_1&=&3\varepsilon ^2
\d \varepsilon\wedge\tau_1=12\varepsilon(\sum_{j=1}^3y_jc^j)\wedge(c^2
\wedge c^3-y_1 \eta)\\
&=&12\varepsilon y_1(\det C-\sum_{i,j=1}^3C_{ij}y_iy_j)\gamma
^{123}=12y_1\varepsilon ^3\gamma 
^{123},
\eea
the last equality (which is the converse to \eqref{vol1}) following
from \eqref{f}, \eqref{ma} and \eqref{ns}. 
These two relations show that $\tau_1$ is closed. The proof that
$\d \tau_2=\d \tau_3=0$ is similar.
\r

By replacing $U_0$ with a smaller open subset if necessary, one can find
1-forms $\sigma_i$ such that $\d \sigma_i=4\tau_i$.
Consider now the 6-dimensional manifold $M:=U_0\times \TM^3$ with
coordinates $y_1,y_2,y_3$ and $x_1,x_2,x_3$ (locally defined). The
1-forms $\theta 
^i:=\d x_i+\sigma _i$ 
satisfy the differential system \eqref{sy}. We define
$\psi ^\pm$ and $\omega$ by \eqref{psis} and 
\eqref{omega} and we claim that they determine a strict NK structure
on $M$ whose automorphism group contains a 3-torus. 

Let us first check that $(\psi ^\pm,\omega)$ satisfy the conditions of
Lemma \ref{1.1}. The relation (i) is straightforward, (ii) is
equivalent to \eqref{vol1}, and (iii) holds from the definition
\eqref{up} of $U_0$. 

In order to prove that $(\psi ^\pm,\omega)$ defines a NK structure, we
need to check \eqref{nk1} and \eqref{nk2}. By Lemma \ref{cs},
\eqref{nk1} is equivalent to $\varepsilon c^i$ being closed (Lemma
\ref{3.1} (i)) together with \eqref{vol1}. Similarly, Lemma
\ref{thetas} shows that \eqref{nk2} is equivalent to the system
\eqref{sy} together with \eqref{vol1} again.

It remains to check that the automorphism group contains a
3-torus. This is actually clear: the action of $\TM^3$ on
$M=U_0 \times \TM^3$ by multiplication on the
first factor, preserves the $\SU(3)$ structure.
We have proved the following result:

\begin{teo}
Every solution of the
Monge--Amp\`ere-type equation 
\eqref{ma} on some open set $U$ in $\RM^3$ defines in a
canonical way a NK structure with $3$ linearly independent commuting
infinitesimal automorphisms on $U_0\times \TM^3$, where $U_0$ is defined
by \eqref{up}. 
\end{teo}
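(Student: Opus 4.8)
The plan is to reverse the computation of Section~4, manufacturing all the tensorial data from $\f$ alone and then invoking the structure-equation criteria already proved. First I would reconstruct the data on the base $U$: put $C:=\Hess(\f)$, define the positive function $\varepsilon$ by \eqref{f} on $U_0$, introduce the $1$-forms $\gamma^i$ by $\d y_i=-3\varepsilon\gamma^i$, and form $\eta$ together with the coframe $c^i:=\sum_j C_{ij}\gamma^j$ and $c:=\sum_j y_j c^j$. The one genuinely analytic input is the identity \eqref{vol1}: whereas in Section~4 it was a consequence of an assumed NK structure, here it has to be extracted from \eqref{ma}. Combining the Monge--Amp\`ere equation with \eqref{f} and the identity \eqref{ns} for $\dr^2\f$ gives exactly $\det C=\varepsilon^2+{}^{t}\!VCV$, as already recorded inside the proof of Lemma~\ref{3.1}. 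I would also verify that the choice \eqref{f} is compatible with $\d\varepsilon=4c$, so that the $\varepsilon$ built from $\f$ is consistent with Lemma~\ref{step3}(ii).

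Next I would manufacture the fibre $1$-forms. These cannot be defined on $U$ since they carry the torus directions, so I appeal to Lemma~\ref{3.1}: part~(i) gives that $\varepsilon c^i$ is exact, and part~(ii) gives that the $2$-forms $\tau_i$ are closed. Closedness allows me, after shrinking $U_0$ and applying the Poincar\'e lemma, to choose primitives $\sigma_i$ with $\d\sigma_i=4\tau_i$. On $M:=U_0\times\TM^3$ I then set $\theta^i:=\d x_i+\sigma_i$, so that $\d\theta^i=4\tau_i$, which is precisely the system \eqref{sy}. With the full coframe $\{\theta^i,\gamma^j\}$ available I define $\omega$ and $\psi^\pm$ by \eqref{omega} and \eqref{psis}.

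The verification then separates into the $\SU(3)$ condition and the NK equations. For the former I would run through Hitchin's Lemma~\ref{1.1}: condition~(i) is a direct algebraic check on the coframe, condition~(ii) is equivalent to the already-established \eqref{vol1}, and condition~(iii) is exactly positive-definiteness of the metric, which on $U_0$ holds by the definition \eqref{up} together with Lemma~\ref{pos-big}. For the NK equations I would cite the earlier characterizations: \eqref{nk1} follows from Lemma~\ref{cs}, since the $\varepsilon c^i$ are closed and \eqref{vol1} holds, while \eqref{nk2} follows from Lemma~\ref{thetas}, since \eqref{sy} holds by construction of the $\theta^i$, again together with \eqref{vol1}. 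Finally, as $\varepsilon$, $\gamma^i$, $c^i$, $\eta$ and the $\sigma_i$ are all pulled back from $U$, the $\TM^3$-action by translation in the $x_i$ preserves the whole $\SU(3)$-structure, so the $\partial_{x_i}$ furnish three linearly independent commuting Killing fields preserving $J$.

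The step I expect to be the main obstacle --- and the one that makes the whole construction cohere --- is the use of the Monge--Amp\`ere equation \eqref{ma}. It never enters the structure equations directly; instead it is used essentially once, to produce \eqref{vol1}, which is then the common hinge for Hitchin's condition~(ii), for the positivity discussion, and, through Lemma~\ref{3.1}(ii), for the very existence of the primitives $\sigma_i$. The delicate point is therefore to confirm that the implications \eqref{ma} $\Rightarrow$ \eqref{f} $\Rightarrow$ \eqref{vol1} are genuinely reversible and mutually consistent; once \eqref{vol1} is in hand, everything reduces to the lemmas of Sections~3 and~4.
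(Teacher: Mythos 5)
Your proposal is correct and follows essentially the same route as the paper: the same reconstruction of $\varepsilon$, $\gamma^i$, $c^i$, $\eta$ from $\f$, the same use of \eqref{ma} (via \eqref{f} and \eqref{ns}) to recover \eqref{vol1}, Lemma \ref{3.1} plus the Poincar\'e lemma to produce the $\theta^i$ satisfying \eqref{sy}, and then Lemmas \ref{1.1}, \ref{cs} and \ref{thetas} together with \eqref{up} and Lemma \ref{pos-big} to verify the $\SU(3)$ and NK conditions and the $\TM^3$-symmetry. The one point you flag as delicate --- that \eqref{ma} yields \eqref{vol1} --- is exactly how the paper closes the loop, inside the proof of Lemma \ref{3.1}(ii).
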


\section{Examples}

We will illustrate the above computations on a specific example of
toric nearly K\"ahler manifold, namely the 3-symmetric space
$S^3\times S^3$.

Let $K:=\SU_2$ with Lie algebra $\kk = \su_2$ and $G:=K\times K \times K
$ with Lie algebra 
$\gg =	\kk\oplus \kk \oplus \kk$. We consider 
the $6$-dimensional manifold $M = G/K$, where $K$ is diagonally
embedded in $G$. The tangent space of $M$
at $o=eK$ can be identified with
$$
\pp = \{(X,Y,Z)\in \kk \oplus \kk \oplus \kk \,|\, X+Y+Z=0\}  .
$$
Consider the invariant scalar product $B$ on $\su_2$ such that the
scalar product 
$$ 
<(X,Y,Z),(X,Y,Z)> := B(X,X) + B(Y,Y) + B(Z,Z)
$$
defines the homogeneous nearly K\"ahler metric $g$ of scalar  
curvature $30$ on $M=S^3\times S^3$ (cf. \cite{au}, Lemma 5.4).

The $G$-automorphism $\sigma$ of order 3 defined by
$\sigma(a_1,a_2,a_3) =(a_2,a_3,a_1)$ induces a canonical almost
complex structure on the $3$-symmetric space $M$ by the relation 
$$\sigma=\frac{-\Id+\sqrt 3 J}{2},\qquad\hbox{on}\ \pp,$$
whence

\beq\label{js}J(X,Y,Z) = \tfrac{2}{\sqrt 3} (Y,Z,X) +
\tfrac{1}{\sqrt{3}}(X,Y,Z),\qquad\forall(X,Y,Z)\in\pp. 
\eeq

Let $\xi$ be a unit vector in $\su_2$ with respect to $B$. The
right-invariant vector 
fields on $G$ generated by the elements
$$\tilde\zeta_1=(\xi,0,0),\qquad \tilde\zeta_2=(0,\xi,0),\qquad
\tilde\zeta_3=(0,0,\xi)$$
of $\gg$, define three commuting Killing vector fields $\zeta_1$,
$\zeta_2$, $\zeta_3$ on $M$. 

Let us compute $g(\zeta_1,J\zeta_2)$ at some point $aK\in M$, where
$a=(a_1,a_2,a_3)$ is some element of $G$. By the definition of $J$ we have
\bea g(\zeta_1,J\zeta_2)_{aK}&=&<(a ^{-1}\tilde\zeta_1 a)_{\pp},J(a
^{-1}\tilde\zeta_2 a)_{\pp}>=<(a_1 ^{-1}\xi a_1,0,0)_{\pp},J(0,a_2
^{-1}\xi a_2,0)_{\pp}>\\
&=&\frac19<(2a_1 ^{-1}\xi a_1,-a_1 ^{-1}\xi a_1,-a_1 ^{-1}\xi
  a_1),J(-a_2^{-1}\xi a_2,2a_2^{-1}\xi a_2,-a_2^{-1}\xi
a_2)>\\
&=&\frac{1}{9}<(2a_1 ^{-1}\xi a_1,-a_1 ^{-1}\xi a_1,-a_1 ^{-1}\xi
  a_1),\sqrt3 (a_2^{-1}\xi a_2,0,-a_2^{-1}\xi
a_2)>\\
&=&\frac{1}{\sqrt3}B(a_1 ^{-1}\xi a_1,a_2^{-1}\xi a_2).
\eea

We introduce the functions
$y_1,y_2,y_3:G\to\RM$ defined by 
$$y_i(a_1,a_2,a_3)=-\frac{1}{\sqrt3}B(a_j^{-1}\xi a_j,a_k^{-1}\xi a_k),\qquad
$$
for every permutation $(i,j,k)$ of $(1,2,3)$. The previous computation
yields: 
$$g(J\zeta_2,\zeta_3)_{aK}=y_1(a),\qquad
g(J\zeta_3,\zeta_1)_{aK}=y_2(a),\qquad
g(J\zeta_1,\zeta_2)_{aK}=y_3(a),
\qquad\forall a\in G.$$

A similar computation yields 
$$g(\zeta_i,\zeta_j)_{aK}=\frac23\delta_{ij}+\frac{1}{\sqrt 3}y_k(a)$$
for every even permutation $(i,j,k)$ of $(1,2,3)$. In other words, the
matrix $C$ defined in \eqref{mc} satisfies
$$C_{ij}=\frac23\delta_{ij}+\frac{1}{\sqrt 3}y_k,$$
where by a slight abuse of notation we keep the same notations $y_i$
for the functions defined on $M$ by the $K$-invariant functions $y_i$
on $G$.

The function $\f$ in the coordinates $y_i$ such that $\Hess(\f)=C$ is
determined by
\beq\label{phi}\f(y_1,y_2,y_3)=\frac13(y_1^2+y_2^2+y_3^2)+\frac{1}{\sqrt
  3}y_1y_2y_3+h,
\eeq
up to some affine function $h$ in the coordinates $y_i$. On the other
hand, since
$$\det(C)=-\frac29(y_1^2+y_2^2+y_3^2)+\frac{2}{3\sqrt 3}y_1y_2y_3+\frac8{27},$$
an easy computation shows that the function $\f$ given by \eqref{phi}
satisfies indeed the 
Monge--Amp\`ere-type equation \eqref{ma} for $h=\frac19$. For the sake of
completeness we list the other functions involved in the
previous section, in the particular case of the present situation:
$$\varepsilon ^2= -\frac89(y_1^2+y_2^2+y_3^2)-\frac{16}{3\sqrt 3}y_1y_2y_3+\frac8{27},$$
$$^t\!VCV=\frac23(y_1^2+y_2^2+y_3^2)+2\sqrt
  3y_1y_2y_3,$$
  where $\varepsilon$ was defined in \eqref{eps} and $V$ in \eqref{vv}.

\subsection{Radial solutions} \label{rad}
We search here particular solutions to equation \eqref{ma}, namely when  $\varphi$ is a radial function on (some open subset of) $\mathbb{R}^3$ with coordinates $y_k, 1 \le k \le 3$. Let therefore $\varphi(y_1,y_2,y_3):=x(\frac{r^2}{2})$ where $x$ is a function 
of one real variable and $r^2=y_1^2+y_2^2+y_3^2$. 
A direct computation yields 
\begin{equation*}
\begin{split}
\Hess(\varphi)=&\,\begin{pmatrix}
y_1^2 x^{\prime \prime}+x^{\prime}& y_1y_2 x^{\prime \prime} &  y_1y_3 x^{\prime \prime}\\
y_1y_2 x^{\prime \prime} & y_2^2 x^{\prime \prime}+x^{\prime} & y_2y_3 x^{\prime \prime}\\
 y_1y_3 x^{\prime \prime} &  y_2y_3 x^{\prime \prime} & y_3^2 x^{\prime \prime}+x^{\prime}
\end{pmatrix}\\
=&\,x^{\prime}\Id+x^{\prime \prime}(\frac{r^2}{2}) V \cdot {}^{t}V
\end{split}
\end{equation*}
where $V:=\begin{pmatrix}
y_1\\
y_2\\
y_3 \end{pmatrix}$. In particular,
\begin{equation*}
\begin{split}
\det \Hess(\varphi)=&\,(x^{\prime})^2 x^{\prime \prime}r^2+(x^{\prime})^3\\
\partial_r \varphi=&\,r^2 x^{\prime}, \  \partial_r^2 \varphi=r^4 x^{\prime \prime}+2r^2x^{\prime},
\end{split}
\end{equation*}
whence after making the substitution $t:=\frac{r^2}{2}$ we get:
\begin{pro} \label{examples} Radial solutions to the Monge-Amp\`ere type equation \eqref{ma} are given by solutions of the second order $ODE$ 
\begin{equation} \label{ode}
x^{\prime \prime}=F(t,x,x^{\prime})
\end{equation}
where $F(t,p,q):=\frac{8p-(10tq+3q^3)}{6(q^2t-2t^2)}$.
\end{pro}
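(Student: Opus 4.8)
The plan is to substitute the radial ansatz $\f = x(r^2/2)$ directly into the Monge--Amp\`ere-type equation \eqref{ma} and then rearrange into normal form; all the necessary ingredients have already been assembled immediately above the statement. Indeed, for such $\f$ we have the Hessian determinant $\det \Hess(\f) = (x')^2 x'' r^2 + (x')^3$ together with the radial derivatives $\dr \f = r^2 x'$ and $\dr^2 \f = r^4 x'' + 2r^2 x'$, so no further geometry is needed and the remaining work is purely algebraic.

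First I would write \eqref{ma} as $\det(\Hess(\f)) = \tfrac{8}{3}\f - \tfrac{11}{3}\dr\f + \dr^2\f$ and replace each of the four terms by its radial expression, using that $\f$ itself equals $x$. Next I would carry out the announced substitution $t := r^2/2$, so that $r^2 = 2t$ and $r^4 = 4t^2$; the equation then reads $2t(x')^2 x'' + (x')^3 = \tfrac{8}{3}x - \tfrac{22}{3}t x' + 4t^2 x'' + 4t x'$, an identity in $t$ with $x, x', x''$ now regarded as functions of the single variable $t$.

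Then I would isolate $x''$. There are exactly two terms containing it, namely $2t(x')^2 x''$ coming from the determinant and $4t^2 x''$ coming from $\dr^2\f$; moving both to one side and the rest to the other, and noting that the two contributions to the coefficient of $t x'$ (from $-\tfrac{11}{3}\dr\f$ and from $\dr^2\f$) combine to $-\tfrac{10}{3}$, yields $x''\bigl(2t(x')^2 - 4t^2\bigr) = \tfrac{8}{3}x - \tfrac{10}{3}t x' - (x')^3$. Multiplying through by $3$ and extracting a factor $6$ from the coefficient of $x''$ gives precisely $x'' = F(t,x,x')$ with $F(t,p,q) = \frac{8p - (10tq + 3q^3)}{6(q^2 t - 2t^2)}$, which is the claim.

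There is no genuine obstacle beyond careful bookkeeping; the only point worth flagging is that dividing by $6(q^2 t - 2t^2)$ in order to solve for $x''$ is legitimate only where this denominator is nonzero. On the locus where $(x')^2 t = 2t^2$ the relation degenerates to the algebraic constraint $8x - 10t x' - 3(x')^3 = 0$ rather than a genuine second-order equation, so the equivalence with \eqref{ode} should be understood on the open set where $(x')^2 t \neq 2t^2$.
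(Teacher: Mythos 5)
Your proposal is correct and follows exactly the paper's own route: substitute the radial expressions for $\det\Hess(\f)$, $\dr\f$ and $\dr^2\f$ into \eqref{ma}, set $t=r^2/2$, and solve for $x''$, which yields $F$ after the coefficients of $tx'$ combine to $-\tfrac{10}{3}$. Your closing caveat about the locus $(x')^2t=2t^2$ is a sensible refinement but harmless here, since on the admissible set of Proposition \ref{rad-no} one has $x'(t)>\sqrt{2t}$, so the denominator never vanishes there.
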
  
To decide which solutions to \eqref{ode} yield genuine Riemannian metrics in dimension six we observe that 
\begin{pro} \label{rad-no}
For a radial solution $\varphi=x(\frac{r^2}{2})$ to \eqref{ma}, the set $U_0$ defined in \eqref{up} is
$$U_0=\{ t>0\ |\  x(t)>2tx^{\prime}(t)> 2t\sqrt{2t} \}. $$
\end{pro}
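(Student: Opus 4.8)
The plan is to unravel the two defining conditions of $U_0$ in \eqref{up} for the radial ansatz $\varphi=x(\tfrac{r^2}2)$, using the explicit Hessian from Proposition \ref{examples} together with the matrix criterion of Lemma \ref{pos-big}. Write $t=\tfrac{r^2}2$, so that $r^2=2t=|V|^2$ with $V={}^t(y_1,y_2,y_3)$, and set $\alpha:=x'(t)$ and $\beta:=x'(t)+2t\,x''(t)$. Since $\Hess(\varphi)=x'\Id+x''\,V\cdot{}^tV$, the matrix $C=\Hess(\varphi)$ has $V$ as an eigenvector with eigenvalue $\beta$ (because $(V\cdot{}^tV)V=|V|^2V=2tV$) and eigenvalue $\alpha$ on the orthogonal complement $V^\perp$; hence $C$ has spectrum $\{\alpha,\alpha,\beta\}$. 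I would record at once the two useful scalars $\det C=\alpha^2\beta$ and ${}^tVCV=\langle CV,V\rangle=2t\beta$.

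First I would treat the scalar condition $\varphi-\dr\varphi>0$. By the computation $\dr\varphi=r^2x'=2t\,x'$ recorded just before Proposition \ref{examples}, this is exactly $x(t)-2t\,x'(t)>0$, i.e.\ the leftmost inequality $x>2tx'$; equivalently, by \eqref{f}, it says $\varepsilon^2=\tfrac83(x-2tx')>0$, which is precisely the positivity needed to define $\varepsilon$.

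Next, the positive-definiteness of $D$. Here I would exploit the $\mathrm{SO}(3)$-symmetry of the radial situation: fixing $t>0$ and rotating the $y$-coordinates, one may assume $V=\sqrt{2t}\,e_3$, so that $C=\mathrm{diag}(\alpha,\alpha,\beta)$ and $\mu$ becomes $\sqrt{2t}$ times the generator of rotations in the $e_1e_2$-plane. Writing $\langle Dz,z\rangle=\langle Ca,a\rangle+\langle Cb,b\rangle+2\langle\mu a,b\rangle$ for $z=(a,b)\in\RM^3\times\RM^3$, the $(a_3,b_3)$-part decouples as $\beta(a_3^2+b_3^2)$, while the remaining four variables split into two copies of the $2\times2$ block $\bigl(\begin{smallmatrix}\alpha&\pm\sqrt{2t}\\ \pm\sqrt{2t}&\alpha\end{smallmatrix}\bigr)$. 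Thus $D>0$ if and only if $\beta>0$ and $\alpha>0$ with $\alpha^2>2t$, i.e.\ $x'>\sqrt{2t}$ together with $x'+2tx''>0$. (The same two conditions drop out of Lemma \ref{pos-big}: part (i) gives $\alpha,\beta>0$, and part (ii), extremized over $a,b\in V^\perp$ in perpendicular position, gives $2t<\alpha^2$.) The first of these, $x'>\sqrt{2t}$, is the middle inequality $2tx'>2t\sqrt{2t}$.

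Finally I would show that the extra condition $\beta=x'+2tx''>0$ is redundant, which is where the hypothesis that $\varphi$ solves \eqref{ma} enters. For the radial ansatz, relation \eqref{vol1} — which, as noted in the proof of Lemma \ref{3.1}, is equivalent to \eqref{ma} via \eqref{f} and \eqref{ns} — reads $\det C=\alpha^2\beta=\varepsilon^2+{}^tVCV=\varepsilon^2+2t\beta$, whence
$$\beta\,(\alpha^2-2t)=\varepsilon^2.$$
Therefore, on the locus where $\varepsilon^2>0$ and $\alpha^2>2t$, one automatically has $\beta>0$. Combining the three steps, $t\in U_0$ if and only if $x>2tx'$ and $x'>\sqrt{2t}$, that is $x(t)>2tx'(t)>2t\sqrt{2t}$, as claimed. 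The only delicate point is the positive-definiteness analysis of the coupled $6\times6$ matrix $D$; the radial symmetry reduces it to elementary $2\times2$ blocks, after which the identity $\beta(\alpha^2-2t)=\varepsilon^2$ eliminates the spurious condition on $\beta$.
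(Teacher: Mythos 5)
Your proof is correct and rests on the same basic ingredients as the paper's (the spectrum $\{\alpha,\alpha,\beta\}$ of $C$, with $\alpha=x'$ and $\beta=x'+2tx''$, the splitting $\RM^3=\RM V\oplus V^\perp$, and the radial form of the NK equation), but it is organized genuinely differently at two points. Where the paper quotes Lemma \ref{pos-big} and then reduces its condition (ii) by Cauchy--Schwarz on $E=V^\perp$, you block-diagonalize $D$ directly: after rotating so that $V=\sqrt{2t}\,e_3$, the quadratic form $\langle Dz,z\rangle$ decouples into $\beta(a_3^2+b_3^2)$ plus two $2\times 2$ blocks with diagonal $\alpha$ and off-diagonal $\pm\sqrt{2t}$, giving $D>0$ if and only if $\beta>0$, $\alpha>0$ and $\alpha^2>2t$. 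This bypasses Lemma \ref{pos-big} altogether and is self-contained; the only unstated point is the $\mathrm{SO}(3)$-equivariance $C(Ry)=RC(y)R^{-1}$, $\mu(Ry)=R\mu(y)R^{-1}$ legitimizing the rotation, which deserves a line. Second, to discard the condition $\beta>0$ the paper substitutes the ODE \eqref{ode}, obtaining $x'+2tx''=\frac{8(x-2tx')}{3((x')^2-2t)}$, whereas you invoke \eqref{vol1} in the form $\beta(\alpha^2-2t)=\varepsilon^2$. These are literally the same identity --- \eqref{ode} is \eqref{vol1} written out for radial functions via \eqref{f} and \eqref{ns} --- so this difference is cosmetic, though your formulation displays more transparently why the equation forces $\beta>0$ once $\varepsilon^2>0$ and $\alpha^2>2t$ hold, while the paper's route makes all conditions visibly collapse to the single inequality $x'>\sqrt{2t}$. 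Both arguments (yours and the paper's) implicitly restrict to $t>0$, where the division by $\alpha^2-2t$ makes sense, consistent with the statement.
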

\begin{proof}
Having $\f-\partial_r \f>0$ is equivalent with 
$$ 2tx^{\prime}(t)-x(t)<0.
$$
The matrix $\Hess(\f)$ has the eigenvalues $x^{\prime}(\frac{r^2}{2})$ with eigenspace $E:=\{a \in \mathbb{R}^3\ |\  \langle a,y\rangle=0\}$
and $x^{\prime}(\frac{r^2}{2})+r^2 x^{\prime \prime}(\frac{r^2}{2})$ with eigenvector $y$. Therefore $\Hess(\varphi)>0$ if and only if 
\beq\label{ine} x^{\prime}(t)>0, \ x^{\prime}(t)+2tx^{\prime \prime}(t)>0.
\eeq
However $x^{\prime}(t)+2tx^{\prime \prime}(t)=\frac{8(x-2tx^{\prime})}{3((x^{\prime})^2-2t)}$ from \eqref{ode}, thus showing that the system \eqref{ine} is equivalent to $x^{\prime}(t) >\sqrt{2t}$. By Lemma \ref{pos-big}, it remains to interpret the condition 
\beq\label{cond}\langle \mu a,b\rangle^2 <\langle Ca,a \rangle \langle Cb,b \rangle\eeq
for all $(a,b)\in(\mathbb{R}^3\times \mathbb{R}^3)\setminus{(0,0)}$. 

We split $a=\lambda_1 y+v_1$, $b=\lambda_2 y+v_2$, with $v_1,v_2\in E$ and take into account that $C$ preserves the
orthogonal decomposition $\mathbb{R}^3=\mathbb{R}y \oplus E$ and also that  $y$ belongs to $\ker \mu$. Then
$$  \langle Ca,a \rangle \langle Cb,b \rangle=(\lambda_1^2\langle Cy,y\rangle+\langle Cv_1, v_1 \rangle )(\lambda_2^2\langle Cy,y\rangle+\langle Cv_2, v_2 \rangle )$$
and since $\mu$ is skew-symmetric,
$$\langle \mu a,b\rangle^2=\langle \mu v_1, v_2\rangle^2.
$$
Thus \eqref{cond}
holds if and only if $\langle Cv_1, v_1 \rangle \langle Cv_2,v_2 \rangle > \langle \mu v_1,v_2\rangle^2 $ for all non-zero $v_1,v_2\in E$. This is equivalent to 
\beq\label{iu}\langle \mu v_1,v_2\rangle^2 <(x^{\prime}(t))^2 \vert v_1 \vert^2 \vert v_2 \vert^2
\eeq
for all $v_1,v_2$ in $E\setminus\{0\}$. By the Cauchy-Schwartz inequality this is equivalent to $-\frac12\tr(\mu^2) <(x^{\prime})^2 (t)$ and since $\tr(\mu^2)=-2r^2=-4t$, \eqref{iu} is equivalent to $x^{\prime}(t) >\sqrt{2t}$. However this was already known and 
the proof is finished.
\end{proof}
\begin{rema} \label{out1}
The solutions of the ODE \eqref{ode} of the form $x=kt^l$ with $k,l\in\RM$ are $x_{1,2}=\pm \frac{2\sqrt{2}}{9}t^{\frac{3}{2}}$ and $x_3=kt^\frac12$, corresponding to 
\begin{equation*} \label{spec2}
\varphi_{1,2}=\pm \frac{r^3}{9},\qquad \f_3=\frac{k}{\sqrt 2}r.
\end{equation*}
However, they do not satisfy the positivity requirements from Proposition \ref{rad-no}.
\end{rema}
Solutions to the Cauchy problem \eqref{ode}, admissible in the sense of Proposition \ref{rad-no}, are obtained by requiring the initial data $(t_0, x(t_0), x^{\prime}(t_0))$ belong to 
$$ \mathcal{S}:=\{(t,p,q) \in \mathbb{R}^3: t>0, \ p>2tq> 2t\sqrt{2t} \}. 
$$

\bigskip

\end{document}